\newenvironment{Ualgorithm}[1][htpb]{\def\@algocf@post@ruled{\kern\interspacealgoruled\hrule  height\algoheightrule\kern3pt\relax}%
\def\@algocf@capt@ruled{under}%
\setlength\algotitleheightrule{0pt}%
\begin{algorithm}[#1]}
{\end{algorithm}}
\newcounter{daggerfootnote}
\newenvironment{varsubequations}[1]
 {%
  \addtocounter{equation}{-1}%
  \begin{subequations}
  
  \def\@currentlabel{#1}%
 }
 {%
  \end{subequations}\ignorespacesafterend
 }
\newcommand\mc[1]{\multicolumn{1}{c}{#1}} 
\newcolumntype{.}{D{.}{.}{-1}}
\newcolumntype{Z}[3]{>{\mathversion{nxbold}\DC@{#1}{#2}{#3}}c<{\DC@end}}
\newcommand\boldc[2]{\multicolumn{1}{Z{.}{.}{-1}}{\textbf{#1}.\textbf{#2}}} 
\newtheorem{theorem}{Theorem}
\newaliascnt{lemma}{theorem}
\newaliascnt{proposition}{theorem}
\newaliascnt{corollary}{theorem}
\newaliascnt{observation}{theorem}
\newtheorem{lemma}[lemma]{Lemma}
\newtheorem{proposition}[proposition]{Proposition}
\newtheorem{corollary}[corollary]{Corollary}
\newcommand{\N}{\mathbb{N}}
\renewcommand{\emptyset}{\varnothing}
\newcommand{\diam}{\ensuremath{\mathrm{diam}}}
\newcommand{\refp}[1]{(\ref{#1})}
\newcommand{\tr}{\ensuremath^\mathrm{T}}
\DeclareMathOperator{\conv}{conv}
\DeclareMathOperator{\cliquelb}{\ensuremath{\omega^\prime}}
\title{An integer program and new lower bounds for computing the strong rainbow connection numbers of graphs}
\author{{\Large Logan A. Smith, David T. Mildebrath, and Illya V. Hicks}}
\date{\today}
\affil{
    \textit{Department of Computational and Applied Mathematics}\\
    \textit{Rice University}\\
    \textit{Houston, TX 77005}}
\begin{document}
\maketitle

\begin{abstract}\noindent
We present an integer programming model to compute the strong rainbow connection number, $src(G)$, of any simple graph $G$. We introduce several enhancements to the proposed model, including a fast heuristic, and a variable elimination scheme. Moreover, we present a novel lower bound for $src(G)$ which may be of independent research interest.  We solve the integer program both directly and using an alternative method based on iterative lower bound improvement, the latter of which we show to be highly effective in practice. To our knowledge, these are the first computational methods for the strong rainbow connection problem. We demonstrate the efficacy of our methods by computing the strong rainbow connection numbers of graphs containing up to $379$ vertices.
\end{abstract}

\section{Background and Motivation}\label{sec:introduction}

Let $G$ be a non-empty simple connected graph with unweighted edges, and let $c:E(G)\to\{1,\dots,k\}$ for $k\in\N$ be a $k$-coloring of the edges of $G$. 
Note that $c$ is not necessarily a proper coloring, as adjacent edges may be the same color; throughout the paper we will consider colorings in this sense unless otherwise specified. 
We say that edge $e$ is color $k'$ if $c(e) = k'$. 
A path $P$ contained in $G$ is rainbow if $c$ maps each edge in $E(P)$ to a distinct color (i.e., for $e_1,e_2\in{E(P)}$, $e_1\neq{e_2}$ implies $c(e_1)\neq{c(e_2)}$).
The graph $G$ is (strongly) rainbow connected with respect to $c$ if, for every pair of vertices $u,v\in{V(G)}$, there exists a (shortest) $(u,v)$-path $P$ contained in $G$ which is rainbow. The (strong) rainbow connection number $rc(G)$ ($src(G)$) is the minimum number of colors $k$ for which there exists a (strong) rainbow $k$-coloring of $G$. It is known that in general, $\diam(G) \leq rc(G)\leq{src(G)} \leq m$, where $m = |E(G)|$ and $\diam(G)$ is the diameter of $G$.

The concept of (strong) rainbow connection was first introduced by Chartrand et al.~\cite{chartrand2008}, and was originally intended to model the flow of classified information between government agencies in the aftermath of the terrorist attacks of September 11, 2001 \cite{chartrand2009}. Such agencies must communicate in a secure manner due to the sensitivity of their information, but messages must also traverse a complex network of intermediaries with established security protocols. Furthermore, while it is important to have enough security protocols to keep information secure, it is also desirable to have few enough to maintain reasonable simplicity. To address this concern, Chartrand et al.~\cite{chartrand2008} originally posed rainbow connection as the following optimization problem: ``What is the minimum number of passwords or ﬁrewalls needed that allows one or more secure paths between every two agencies so that the passwords along each path are distinct?" Rainbow connection has since been applied in other areas, including the routing of information over secure computer networks (e.g.~rainbow paths arise in~``onion routing'' \cite{reed1998}). In addition to these applications, rainbow connection is of theoretical interest, and has recently garnered significant attention (see Li et al.~\cite{li2013a} for a broad survey of rainbow connection and its many variants).

In spite of this growing attention, there does not currently exist a method for computing $src(G)$ in general graphs $G$. 
Previous works have proposed heuristic methods to produce rainbow colorings for variants of the strong rainbow connection problem~\cite{rc-heur,MOP-heuristic,rvc-heur}. However, none of these methods apply to the strong rainbow connection problem itself. Moreover, even in the event that these methods find an optimal solution, they are unable to provide certificates of optimality.  
In this work, we present the first exact solution method for computing $src(G)$ for general graphs. Our method relies on integer programming, which has a long history of solving difficult combinatorial problems (see, e.g., Nemhauser and Wolsey \cite{nemhauser1988}). The critical component of our method is a practical technique for computing a strong lower bound for $src(G)$. In addition to its computational value, we believe this lower bound is also of theoretical interest. 

The problem of computing $src(G)$ is known to be theoretically challenging.  For certain classes of graphs---including trees, cycles, wheels, complete multipartite graphs \cite{chartrand2008}, fan and sun graphs \cite{sy2013}, stellar graphs \cite{shulhany2016} and block clique graphs \cite{keranen2018}--- $src(G)$ may be computed in polynomial time. However, in general, determining whether $src(G)\leq{k}$ for $k\geq{3}$ is $\mathcal{NP}$-hard, even when $G$ is bipartite \cite{ananth2011} (the same is true of $rc(G)$ \cite{chakraborty2009}). Moreover, simply determining whether or not a given edge coloring strongly rainbow connects a graph is known to be $\mathcal{NP}$-complete in general graphs~\cite{uchizawa2013}.

In this paper we make the following contributions:
\begin{itemize}
\item We present the first exact solution method for computing $src(G)$ for general graphs $G$. 
\item We derive a novel lower bound for $src(G)$ for general graphs $G$. In addition to its value for computing the strong rainbow connection number, we believe that this bound may be of independent research interest.
\item We introduce a series of computational enhancements, including a fast combinatorial heuristic and a variable elimination scheme, which significantly improve the performance of our method.
\item We demonstrate the efficacy of our method by applying it to a collection of graphs containing up to $379$ vertices.
\end{itemize}

The remainder of this paper is organized as follows.
\autoref{sec:prelim} presents notation and preliminaries used throughout the paper. \autoref{sec:lb} introduces a novel lower bound for the strong rainbow connection numbers of general graphs. \autoref{sec:ip-land} proposes an integer programming model to compute the strong rainbow connection numbers of general graphs, as well as a series of computational enhancements to improve the tractability of the integer programming model. \autoref{sec:computation} contains the results of several computational experiments, and concluding remarks are given in \autoref{sec:conclusion}.

\section{Notation and Preliminaries}\label{sec:prelim}

Let $G$ be a graph with vertex set $V(G)$ and edge set $E(G)$. An edge $e=uv\in{E(G)}$ is said to be {\em incident} upon the vertices $u,v$. A graph $G$ is {\em connected} if there exists a path between every pair of vertices in $G$, and {\em simple} if both no vertex is adjacent to itself and no two distinct edges are incident upon the same pair of vertices.
For the remainder of the paper, we assume that all considered graphs are simple, connected, and have at least one edge. If every pair of vertices are adjacent in $G$, then $G$ is called {\em complete}. The complete graph on $n$ vertices is denoted $K_n$. Additionally, a set of vertices which induces a complete graph is called a {\em clique}. The cardinality of the largest clique contained in a graph $G$ is called the {\em clique number} of $G$, and is denoted $\omega(G)$. A function $f:V(G) \to \N$ is called a {\em proper coloring of the vertices} of $G$, or simply a {\em proper coloring} of $G$, if for every pair of adjacent vertices $u,v \in V(G)$, $f(u) \neq f(v)$. The smallest $k \in \N$ for which there exists a proper coloring $f:V(G) \to \{1, \dots, k\}$ is called the {\em chromatic number} of $G$ and is denoted $\chi(G)$. It is well known that $\chi(G) \geq \omega(G)$ holds in general graphs. 

A graph $G$ with $\chi(G) \leq 2$ is called {\em bipartite}, and a {\em complete bipartite graph} is a graph whose vertices can be partitioned into two vertex sets $V_1, V_2$ such that no edge is incident upon two vertices within the same partition and no edge can be added to $G$ that is incident upon vertices in different partitions. The graph $K_{n_1,n_2}$ denotes the complete bipartite graph for which $|V_1| = n_1$ and $|V_2| = n_2$. The {\em distance} between vertices $u, v$, denoted $d(u,v)$, is the the number of edges in any shortest $(u,v)$-path in $G$. A graph is called {\em geodetic} if every pair of distinct vertices in that graph are connected by a unique shortest length path. The union of two graphs $G_1, G_2$ is the graph whose vertex set and edge set are $V(G_1) \cup V(G_2)$ and $E(G_1) \cup E(G_2)$, respectively. Unless otherwise stated, the vertex and edge sets of $G_1, G_2$ are not necessarily assumed to be disjoint. Similarly, we define the intersection of $G_1, G_2$ as the graph with the vertex set $V(G_1) \cap V(G_2)$ and the edge set $E(G_1) \cap E(G_2)$. 

Additional notation and well known results in graph theory are provided by Bondy and Murty~\cite{bondy}.


\section{New Lower Bounds for Strong Rainbow Connection}\label{sec:lb}

In this section, we introduce a novel lower bound for the strong rainbow connection number $src(G)$. This lower bound and its construction will play a critical role in the computational methods introduced later. Additionally, we believe that this bound may be of independent theoretical interest.

By definition, a strong rainbow coloring connects each pair of vertices in $G$ with a rainbow shortest path. Consequently, we begin by studying the set $\mathcal{P}$ of shortest paths in $G$. For each pair of vertices $u\neq{v}\in{V(G)}$, let $\mathcal{P}_{uv}$ denote the set of shortest $(u,v)$-paths in $G$.
To analyze the structure of the set $\mathcal{P}$, it is useful to consider a set of directed graphs, $\mathcal{D}(G) = \{D_{uv} : u \neq v \in V(G)\}$ where $D_{uv}$ is obtained by taking the graph union of each directed shortest $(u,v)$-path in $G$. This set of directed graphs will be used to construct an auxiliary graph which will provide a lower bound for the strong rainbow connection numbers of general graphs.

Additionally, we introduce a notion of separation to indicate edges and vertices which are always present in certain shortest paths. Vertices $u_1,u_2\in{V(G)}$ are {\em separated} by a vertex $v\in{V(G) \backslash \{u_1, u_2\}}$ if $v\in{V(P)}$ for all $P\in\mathcal{P}_{u_1u_2}$. Vertices $u_1,u_2\in{V(G)}$ are {\em separated} by an edge $e\in{E(G)}$ if $e\in{E(P)}$ for all $P\in\mathcal{P}_{u_1 u_2}$. Equivalently, an edge $v_1 v_2$ separates vertices $u_1, u_2$ in $G$ if and only if there is a directed edge $e$ between $v_1,v_2$ in $D_{u_1 u_2}$ and no directed $(u_1,u_2)$-path in $D_{u_1 u_2} \backslash e$. Similarly, a vertex $v$ separates vertices $u_1, u_2$ in $G$ if and only if there is no directed $(u_1,u_2)$-path in $D_{u_1 u_2} - v$. 
\begin{proposition} \label{prop:cutvx-connectivity}
Let $G$ be a graph and $f$ be an edge coloring of $E(G)$. Let $v_1, v_2, u \in V(G)$ such that $u$ separates $v_1$ and $v_2$. If there exists a rainbow shortest path from $v_1$ to $v_2$ in $G$ with respect to $f$, then there exist rainbow shortest paths in $G$ from $v_1$ to $u$ and from $u$ to $v_2$ with respect to $f$. 
\end{proposition}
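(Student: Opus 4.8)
The plan is to take the rainbow shortest $(v_1,v_2)$-path whose existence is assumed and cut it at $u$. First I would invoke the separation hypothesis: since $u$ separates $v_1$ and $v_2$, by definition $u \in V(P)$ for every $P \in \mathcal{P}_{v_1 v_2}$, so in particular $u$ lies on the given rainbow shortest path, which I will call $P$. Also $u \notin \{v_1,v_2\}$ by the definition of separation, so $u$ is an internal vertex of $P$; since $P$ is a simple path it visits $u$ exactly once, and hence $P$ decomposes uniquely as a concatenation $P = P_1 \cdot P_2$, where $P_1$ is the $(v_1,u)$-segment and $P_2$ the $(u,v_2)$-segment, with $E(P_1) \cup E(P_2) = E(P)$ and $E(P_1) \cap E(P_2) = \emptyset$.

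The two segments inherit the rainbow property for free: $f$ is injective on $E(P)$, hence injective on each of $E(P_1)$ and $E(P_2)$, so $P_1$ and $P_2$ are rainbow with respect to $f$. It then remains to check that $P_1$ and $P_2$ are shortest paths, i.e.\ that $|E(P_1)| = d(v_1,u)$ and $|E(P_2)| = d(u,v_2)$. For this I would combine the triangle inequality with the assumption that $P$ is a shortest $(v_1,v_2)$-path:
\[
|E(P_1)| + |E(P_2)| \;=\; |E(P)| \;=\; d(v_1,v_2) \;\leq\; d(v_1,u) + d(u,v_2) \;\leq\; |E(P_1)| + |E(P_2)|,
\]
the last step because any $(v_1,u)$-path has at least $d(v_1,u)$ edges and any $(u,v_2)$-path at least $d(u,v_2)$ edges. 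Equality must hold throughout, forcing $|E(P_1)| = d(v_1,u)$ and $|E(P_2)| = d(u,v_2)$, which is exactly what is needed.

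I do not expect a genuine obstacle: the statement amounts to the fact that a sub-segment of a rainbow shortest path is again a rainbow shortest path, together with the observation that the separation hypothesis is precisely what guarantees $u$ actually lies on the particular rainbow path we are handed (it need not lie on an arbitrary shortest path). The one point needing care is that $u$ be distinct from $v_1$ and $v_2$, so that the decomposition is non-degenerate and the triangle inequality is applied to a genuine triple of vertices; this is built into the definition of ``separated'' given above. One could also phrase the argument through the auxiliary digraphs $D_{v_1 v_2}$ of \autoref{sec:lb}, but the direct path argument seems cleanest.
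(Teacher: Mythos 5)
Your proposal is correct and follows essentially the same route as the paper's proof: invoke separation to place $u$ on the given rainbow shortest path, split the path at $u$, and observe that each segment is a shortest path inheriting the rainbow property. The only difference is that you spell out via the triangle inequality the fact that subpaths of shortest paths are shortest, which the paper simply asserts.
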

\begin{proof}
Let $P\in\mathcal{P}_{v_1v_2}$ be rainbow with respect to $f$. Because $u$ separates $v_1,v_2$, by definition $u\in{V(P)}$. Let $P'$ be the $(v_1,u)$-subpath contained in $P$, and let $P''$ be the $(u,v_2)$-subpath contained in $P$. Because $P'$, $P''$ are each subpaths of a shortest path, they must themselves be shortest paths between their respective ends. Additionally, because $P$ is rainbow with respect to $f$, $f$ maps each of the edges in $E(P')$ and $E(P'')$ to distinct colors. Thus, there exist rainbow shortest paths in $G$ from $v_1$ to $u$ and $u$ to $v_2$.
\end{proof}

\begin{figure}[t!]\centering
\begin{tikzpicture}[scale=4.0]
\input{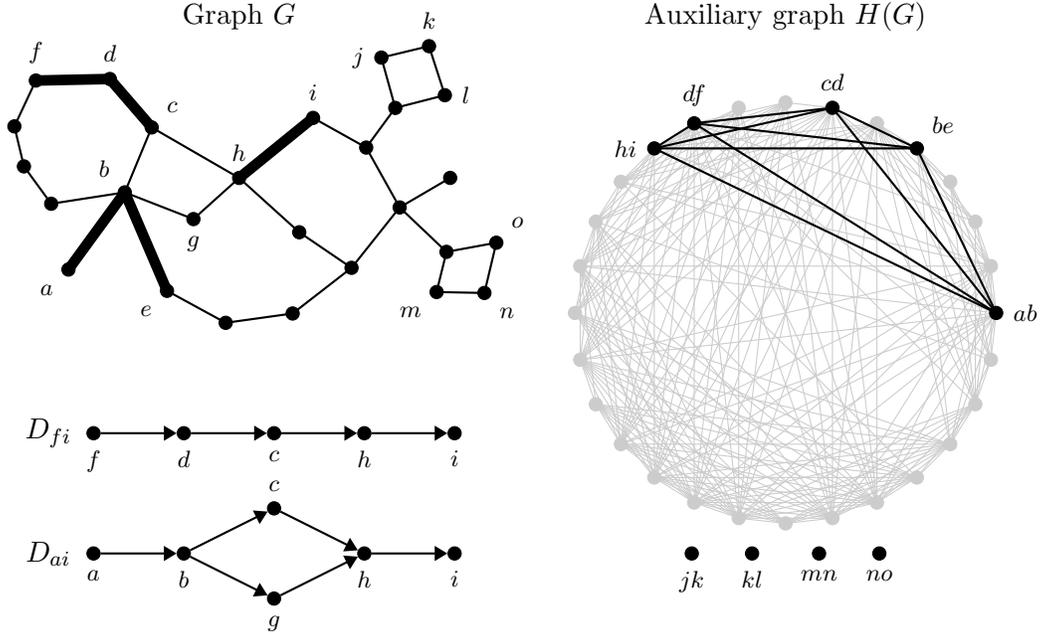}
\end{tikzpicture}
\caption{
Sample graph $G$ (top left) and its auxiliary graph $H= H(G)$ (right). The bold edges $ab$, $be$, $cd$, $df$ and $hi$ in $E(G)$ correspond to the highlighted clique in $H$, and thus must be different colors in any edge coloring that strongly rainbow connects $G$ (\autoref{cor:edges-different}). Two example directed graphs from $\mathcal{D}(G)$ for the vertex pairs $(f,i)$ and $(a,i)$ are shown at bottom left. From the directed graph $D_{ai}$ we see that the edges $ab$ and $hi$ each separate vertices $a$ and $i$, and thus $ab$ and $hi$ are adjacent in $H$. Note that the four edges $jk, kl, mn,$ and $no$ each fail to separate any pairs of vertices in $G$. Their corresponding vertices in $H$ are thus each isolated. 
}
\label{fig:example}
\end{figure}

We now define an auxiliary graph $H(G)$ which encodes information about the set of separating edges in $G$. Let $H(G) = (V_H, E_H)$, where $V_H = E(G)$ and $E_H = \{e_1 e_2 \in E(G) : $ there exists a pair of vertices $ v_1, v_2 \in V(G)$ which are separated by both $e_1$ and $e_2\}$. 
An example of a graph $G$ and its auxiliary graph $H(G)$ is shown is \autoref{fig:example}. 

To ease notation we will often denote $H(G)$ as simply $H$. Additionally, define $\cliquelb(G):=\omega(H)$ and $\chi'(G):=\chi(H)$.
\autoref{thm:src-bound} shows how $H$ can be used to provide lower bounds on the strong rainbow connection number of the graph $G$.

\begin{theorem} \label{thm:src-bound}
For any graph $G$, $src(G) \geq \chi'(G) \geq \cliquelb(G)$. Additionally, if an edge coloring $f$ strongly rainbow connects $G$, then $f$ is a proper coloring of the vertices of $H$. 
\end{theorem}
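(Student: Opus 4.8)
The plan is to establish the second assertion first, since the inequality $src(G) \geq \chi'(G)$ will follow from it immediately. So suppose $f$ is an edge coloring that strongly rainbow connects $G$. Recalling that $V_H = E(G)$, the function $f$ is literally an assignment of colors to the vertices of $H = H(G)$, so it makes sense to ask whether it is proper. Take an arbitrary edge $e_1 e_2 \in E_H$. By the definition of $E_H$, there is a pair of vertices $v_1, v_2 \in V(G)$ separated by both $e_1$ and $e_2$. Since $f$ strongly rainbow connects $G$, there is a shortest $(v_1,v_2)$-path $P \in \mathcal{P}_{v_1 v_2}$ that is rainbow with respect to $f$. Because $e_1$ separates $v_1$ and $v_2$, the definition of separation forces $e_1 \in E(P)$, and likewise $e_2 \in E(P)$. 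As $P$ is rainbow, $f$ assigns pairwise distinct colors to all edges of $E(P)$, so in particular $f(e_1) \neq f(e_2)$. Since $e_1 e_2$ was an arbitrary edge of $H$, this shows $f$ is a proper coloring of the vertices of $H$.

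Now the chain of inequalities follows quickly. Let $f^\star$ be a strong rainbow coloring of $G$ attaining $src(G)$, i.e., using exactly $src(G)$ colors. By the previous paragraph $f^\star$ is a proper vertex coloring of $H$ using $src(G)$ colors, so by definition of the chromatic number $\chi(H) \leq src(G)$, that is, $src(G) \geq \chi'(G)$. Finally $\chi'(G) = \chi(H) \geq \omega(H) = \cliquelb(G)$ is exactly the standard inequality $\chi \geq \omega$ recorded in \autoref{sec:prelim}, applied to the graph $H$.

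I expect no genuine obstacle here; the proof is essentially a direct unfolding of the definitions. The two points that warrant (minor) care are: (i) using the correct sense of \emph{edge separation} — that a separating edge lies in \emph{every} shortest path between the pair it separates, not merely some such path — which is what forces both $e_1$ and $e_2$ into the single rainbow path $P$ rather than into two different rainbow paths; and (ii) observing that the number of colors used by any strong rainbow coloring is bounded below by $\chi(H)$, precisely because every strong rainbow coloring restricts to a proper coloring of $H$. Note in particular that \autoref{prop:cutvx-connectivity} is not needed, as it concerns separation by a \emph{vertex}, whereas the auxiliary graph $H$ and this theorem are built from separation by \emph{edges}; edges of $E(G)$ that separate no pair of vertices correspond to isolated vertices of $H$ and play no role in the bound.
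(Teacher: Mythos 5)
Your proposal is correct and takes essentially the same approach as the paper: the paper proves the properness claim by contrapositive (a monochromatic pair of adjacent vertices of $H$ forces every shortest path between the separated pair to repeat a color), whereas you argue the same point directly, and both then deduce $src(G)\geq\chi'(G)\geq\cliquelb(G)$ from properness together with $\chi(H)\geq\omega(H)$.
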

\begin{proof}

We first verify that the second statement of the theorem holds via its contrapositive. Let $G$ be a graph with edge coloring $f$ and assume that $f$ is not a proper coloring of the vertices of the auxiliary graph $H$. Since $f$ is not a proper coloring of the vertices of $H$, there must exist $e_1, e_2 \in V(H)$ such that $e_1,e_2$ are adjacent in $H$ and $f(e_1) = f(e_2)$. Since $e_1$ and $e_2$ are adjacent in $H$, there must exist a pair of vertices $v_1, v_2 \in V(G)$ such that $e_1$ and $e_2$ each separate $v_1$ and $v_2$ in $G$---that is, each $(v_1,v_2)$-shortest-path in $G$ must include both edges $e_1$ and $e_2$. Because $f(e_1) = f(e_2)$, we conclude that no shortest $(v_1,v_2)$-path in $G$ is rainbow. Thus, the edge coloring $f$ cannot strongly rainbow connect $G$. 

Having established that any strong rainbow edge coloring $f$ of a graph $G$ must also be a proper coloring of the vertices of $H$, it follows that no edge coloring $f'$ of $G$ that uses fewer than $\chi'(G)$ colors can strongly rainbow connect $G$ as then $f'$ would also be a proper coloring of the vertices of $H$ despite having fewer than $\chi'(G)$ colors. We conclude then that $src(G) \geq \chi'(G)$. Additionally as $\chi'(G) \geq \omega'(G)$, the full statement $src(G) \geq \chi'(G) \geq \cliquelb(G)$ holds. 
\end{proof}


\begin{corollary}\label{cor:edges-different}
Let $G$ be a graph and $f$ be a strong rainbow edge coloring of $G$. If $U \subseteq V(H) = E(G)$ is a clique in $H$, then $f(e_1) \neq f(e_2)$ for any $e_1, e_2 \in U$. 
\end{corollary}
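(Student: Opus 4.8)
The plan is to derive this corollary almost immediately from \autoref{thm:src-bound}, which already establishes that any strong rainbow edge coloring $f$ of $G$ is a proper coloring of the vertices of $H$. The only work left is to translate the phrase ``clique in $H$'' into the pairwise-distinctness conclusion, which is essentially the defining property of a proper vertex coloring restricted to a clique.

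First I would invoke \autoref{thm:src-bound} to assert that, since $f$ strongly rainbow connects $G$, the function $f$ is a proper coloring of the vertices of $H$. Then I would let $U \subseteq V(H) = E(G)$ be a clique in $H$ and pick any two distinct elements $e_1, e_2 \in U$. By the definition of a clique, $e_1$ and $e_2$ are adjacent in $H$. By the definition of a proper coloring of the vertices of $H$, adjacent vertices receive distinct colors, so $f(e_1) \neq f(e_2)$. Since $e_1, e_2$ were arbitrary distinct members of $U$, this gives the claim. (If $e_1 = e_2$ the statement is vacuous or trivially read as referring to distinct edges, so no separate case is needed.)

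There is essentially no obstacle here; the corollary is a direct restatement of the second half of \autoref{thm:src-bound} specialized to cliques, and I would keep the proof to two or three sentences. If one wanted a self-contained argument not citing the theorem, one could instead recall that each edge of $H$ joining $e_1$ and $e_2$ witnesses a vertex pair $v_1, v_2$ separated by both $e_1$ and $e_2$, so every shortest $(v_1,v_2)$-path uses both edges; were $f(e_1) = f(e_2)$, no such path could be rainbow, contradicting that $f$ strongly rainbow connects $G$. But citing the theorem is cleaner, so that is the route I would take.
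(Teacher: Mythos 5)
Your proposal is correct and follows exactly the paper's argument: cite \autoref{thm:src-bound} to get that $f$ is a proper coloring of $V(H)$, then note that any two distinct edges in the clique $U$ are adjacent in $H$ and hence receive different colors. Nothing further is needed.
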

\begin{proof}
By \autoref{thm:src-bound}, if $f$ strongly rainbow connects $G$ then $f$ must also be a proper coloring of $V(H)$. Let $U \subseteq V(H) = E(G)$ be a clique in $H$. Then every pair of edges $e_1, e_2 \in U$ are adjacent in $H$, and so $f(e_1) \neq f(e_2)$.  
\end{proof}

An illustration of \autoref{cor:edges-different} is shown in \autoref{fig:example}.

\begin{theorem} \label{thm:src-geodetic}
For any geodetic graph $G$, $src(G) = \chi'(G)$. Additionally, an edge coloring $f$ strongly rainbow connects $G$ if and only if $f$ is a proper coloring of the vertices of $H$. 
\end{theorem}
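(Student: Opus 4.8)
The plan is to leverage \autoref{thm:src-bound}, which already gives $src(G) \ge \chi'(G)$ and shows that every strong rainbow coloring of $G$ is a proper coloring of $V(H)$. Hence only the converse implication needs a new argument: that in a geodetic graph, \emph{every} proper coloring of the vertices of $H$ strongly rainbow connects $G$. Granting this, picking a proper coloring of $H$ with exactly $\chi'(G) = \chi(H)$ colors yields a strong rainbow coloring of $G$ using $\chi'(G)$ colors, so $src(G) \le \chi'(G)$; combined with \autoref{thm:src-bound} this gives $src(G) = \chi'(G)$, and the two implications together give the claimed ``if and only if.''

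The core observation is that in a geodetic graph the edge set of every shortest path forms a clique in $H$. First I would fix distinct $u, v \in V(G)$ and let $P$ be \emph{the} (unique, by geodeticity) shortest $(u,v)$-path, so that $\mathcal{P}_{uv} = \{P\}$. Then every edge $e \in E(P)$ lies in all members of $\mathcal{P}_{uv}$ and therefore, by definition, separates $u$ and $v$. Consequently, for any two edges $e_1, e_2 \in E(P)$, the pair $u, v$ is separated by both $e_1$ and $e_2$, so $e_1 e_2 \in E_H$; that is, $E(P)$ induces a clique in $H$.

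Now suppose $f$ is a proper coloring of the vertices of $H$. Since $E(P)$ is a clique in $H$, $f$ assigns pairwise distinct colors to the edges of $P$ (exactly as in \autoref{cor:edges-different}), i.e., $P$ is a rainbow shortest $(u,v)$-path with respect to $f$. As $u, v$ were arbitrary, $f$ strongly rainbow connects $G$, which is precisely the converse implication required above.

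I do not expect a substantial obstacle here; the only care required is bookkeeping with the definition of separation and the degenerate case where $P$ is a single edge (trivially rainbow, with a one-vertex ``clique'' in $H$ imposing no constraint). The essential content is simply that geodeticity collapses $\mathcal{P}_{uv}$ to a single path, which forces all of its edges to be mutually separating and hence mutually adjacent in $H$.
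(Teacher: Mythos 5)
Your proposal is correct and follows essentially the same route as the paper: invoke \autoref{thm:src-bound} for one direction, and for the converse use geodeticity to get a unique shortest $(u,v)$-path whose edges all separate $u$ and $v$, hence are pairwise adjacent in $H$ and receive distinct colors under any proper coloring of $V(H)$. The only cosmetic difference is that you phrase the key step as ``$E(P)$ is a clique in $H$'' (in the spirit of \autoref{cor:edges-different}), which is just a repackaging of the paper's statement that every pair of edges of $P$ is adjacent in $H$.
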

\begin{proof}

From \autoref{thm:src-bound}, if $f$ strongly rainbow connects $G$ then $f$ must be a proper coloring of the vertices of $H$ and $src(G) \geq \chi'(G)$. To verify the other direction of the theorem, we now assume that $f$ is a proper coloring of the vertices of $H$. Consider any pair of distinct vertices $u, v$ in $V(G)$. Because $G$ is geodetic, there exists a unique $(u,v)$-shortest-path in $G$---call it $P_{uv}$. Because $P_{uv}$ is unique, it follows that each edge $e\in{E(P_{uv})}$ separates $u$ from $v$, and thus every pair of edges in $E(P_{uv})$ are adjacent in $H$. Since $f$ is a proper coloring of $V(H)$, $f$ must map each pair of edges in $E(P_{uv})$ to different colors, thus the unique $(u,v)$-shortest-path in $G$ must be rainbow with respect to $f$. Because $u,v$ were chosen arbitrarily in $V(G)$ it follows that every pair of vertices in $G$ are connected by a rainbow shortest path, and thus that $f$ strongly rainbow connects $G$.

Because every proper coloring of the vertices of $H$ must strongly rainbow connect $G$, there must exist a proper vertex coloring $f'$ which colors the vertices of $H$ using exactly $\chi(H)$ colors. Since the function $f'$ is also a strong rainbow coloring of $G$ we conclude that $src(G) \leq \chi'(G)$. Combining this with the earlier inequality given in \autoref{thm:src-bound}, we have $src(G) = \chi'(G)$.
\end{proof}

Despite the fact that $\chi'(G)$ is a theoretically tighter bound than $\cliquelb(G)$, in \autoref{sec:computation}, we compute only $\cliquelb(G)$, not $\chi'(G)$. The reason for this choice is two-fold. First, for many of the graphs we consider---particularly those describing social and infrastructure networks---the clique bound satisfies $\cliquelb(G)=src(G)$. In such situations, \autoref{thm:src-bound} implies that $\chi'(G)=\cliquelb(G)$, and thus computing $\chi'(G)$ is not necessary. Second, computing $\chi'(G)$ requires computing the chromatic number of $H$, while computing $\cliquelb(G)$ requires computing the clique number of $H$.
Although computing both of these values is $\mathcal{NP}$-hard in general, computational experiments performed by Verma et al.~\cite{verma} suggest that in sparse graphs, clique numbers can often be computed much more quickly than chromatic numbers. Verma et al. also propose and consider several lower bounds~\cite{culberson1996exploring} for the chromatic numbers of sparse graphs, and found that the clique number provides the best lower bound in 45 of the 53 instances they consider. In all, we observe that computing $\cliquelb(G)$ sacrifices very little in terms of bound quality and is relatively inexpensive in terms of overall runtime.

Because of this observation, we devote the remainder of this section to studying the lower bound $\omega'(G)$.
Specifically, we show that the lower bounds $\diam(G)$ and $\cliquelb(G)$ for $src(G)$ are incomparable (\autoref{prop:incomparable}) and may be simultaneously arbitrarily far from $src(G)$ (\autoref{prop:arb-far}).
In spite of these results, we demonstrate in \autoref{sec:computation} that the bound provided by $\cliquelb(G)$ is stronger than that of $\diam(G)$ in many random graphs distributed by the Erd\H{o}s-R\'enyi random graph model \cite{ER-graphs}, as well as many graphs commonly used to describe infrastructure networks or social networks. 

\begin{figure}\centering
\begin{tikzpicture}

\tikzstyle{whitenode} = [draw,circle,minimum size=5pt,inner sep=0pt, thick]
\tikzstyle{blacknode} = [draw,circle,minimum size=5pt,inner sep=0pt, fill]
\tikzstyle{dotedge} = [very thick, dotted]
\tikzstyle{solidedge} = [thick] 


\newcommand{\DrawK}[4]{ 
    \pgfmathsetmacro{\k}{#1}
    \pgfmathsetmacro{\gap}{#2}
    \pgfmathsetmacro{\x}{#3}
    \pgfmathsetmacro{\y}{#4}
    \pgfmathsetmacro{\kpt}{\k+1}
    \foreach \i in {0,...,\kpt} {
        \node[blacknode] at (\x+\i*\gap,\y+\gap) {};
        \node[blacknode] at (\x+\i*\gap,\y) {};
        \draw[solidedge] (\x+\i*\gap,\y) -- (\x+\i*\gap,\y+\gap);
        \ifthenelse
        { 
            \i < \kpt
        }
        { 
            \draw[solidedge] (\x+\i*\gap,\y) -- (\x+\gap*\i+\gap,\y);
            \draw[solidedge] (\x+\i*\gap,\y+\gap) -- (\x+\gap*\i+\gap,\y+\gap);
            \draw[solidedge] (\x+\i*\gap,\y) -- (\x+\gap*\i+\gap,\y+\gap);
            \draw[solidedge] (\x+\i*\gap,\y+\gap) -- (\x+\gap*\i+\gap,\y);
        }
        {
        }
    }
} 

\DrawK{1}{1.0}{0}{0}
\DrawK{2}{1.0}{3}{0}
\DrawK{3}{1.0}{7}{0}

\node[] at (1,-0.5) {$k=1$};
\node[] at (4.5,-0.5) {$k=2$};
\node[] at (9,-0.5) {$k=3$};

\end{tikzpicture}
\caption{Sequence of graphs for which $\diam(G)-\omega'(G)=k$, shown for $k=1,2,3$. Note that the $k^\mathrm{th}$ graph has diameter $k+1$.}
\label{fig:k4s}
\end{figure}

\begin{proposition}\label{prop:incomparable}
For any $k \in \N$, there exist graphs $G$ and $G'$ such that $\diam(G) - \cliquelb(G) = k$ and $\cliquelb(G') - \diam(G') = k$. 
\end{proposition}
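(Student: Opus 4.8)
The plan is to construct, for each $k$, two explicit graphs: one witnessing $\diam(G) - \cliquelb(G) = k$ and one witnessing $\cliquelb(G') - \diam(G') = k$. For the former, take $G = G_k$ to be the ``chain of $K_4$'s'' of \autoref{fig:k4s}: the vertices split into $k+2$ rungs $R_0, \dots, R_{k+1}$, each rung a pair of adjacent vertices, with $R_i \cup R_{i+1}$ inducing a $K_4$ for every $0 \le i \le k$. For the latter, take $G' = G'_k$ to be $k+2$ triangles $\{c, a_i, b_i\}$, $1 \le i \le k+2$, sharing only the common vertex $c$ (the friendship graph).

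For $G_k$, I would first record that the graph is ``layered'' by its rungs: every edge lies within a rung or between consecutive rungs, and every vertex of $R_i$ is adjacent to every vertex of $R_{i+1}$. Hence $d(u,v) = |i-j|$ for $u \in R_i$, $v \in R_j$ with $i \ne j$, and $\diam(G_k) = k+1$ (attained by $R_0$ and $R_{k+1}$). Next I would show $H(G_k)$ has no edges, so that $\cliquelb(G_k) = \omega(H(G_k)) = 1$: if some edge $e$ separated a pair of vertices other than its own two endpoints, that pair would be $v_1 \in R_i$, $v_2 \in R_j$ with $j \ge i+2$; every shortest $v_1$--$v_2$ path then takes exactly one vertex from each of $R_i, \dots, R_j$ and uses only edges between consecutive rungs in that range. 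If $e$ lies outside this range it is on no such path; otherwise $e$ joins $R_\ell$ and $R_{\ell+1}$, and since $j \ge i+2$ at least one of these two rungs is strictly between $R_i$ and $R_j$, so I can route a shortest $v_1$--$v_2$ path choosing its vertex on that interior rung to avoid the corresponding endpoint of $e$ (every such choice is legal because consecutive rungs are completely joined), thereby dodging $e$. So no edge separates a pair at distance $\ge 2$, $H(G_k)$ is edgeless, and $\diam(G_k) - \cliquelb(G_k) = (k+1) - 1 = k$.

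For $G'_k$, I would first observe $\diam(G'_k) = 2$: the graph is not complete, yet any two vertices are either adjacent or joined by a path of length $2$ through $c$. For the lower bound, the $k+2$ spokes $c a_1, \dots, c a_{k+2}$ form a clique in $H(G'_k)$: when $i \ne j$ the only common neighbor of $a_i$ and $a_j$ is $c$, so $a_i$--$c$--$a_j$ is the unique shortest $a_i$--$a_j$ path, and both $c a_i$ and $c a_j$ separate that pair; hence $\cliquelb(G'_k) \ge k+2$. For the exact value, I would then describe $H(G'_k)$ completely: each triangle edge $a_i b_i$ is the unique shortest path of its own endpoints and lies on no shortest path between a distance-$2$ pair (those all pass through $c$), so it is isolated in $H(G'_k)$; and two spokes are adjacent in $H(G'_k)$ precisely when they reach distinct triangles (the pertinent shortest length-$2$ paths through $c$ being unique), so on the spokes $H(G'_k)$ is the complete multipartite graph with the $k+2$ parts $\{c a_i, c b_i\}$, whose clique number is $k+2$. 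Therefore $\cliquelb(G'_k) = k+2$ and $\cliquelb(G'_k) - \diam(G'_k) = (k+2) - 2 = k$.

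I expect the only genuine obstacle to be the verification that $H(G_k)$ is edgeless: this requires the short case analysis above, showing that any hypothetical separating edge can be routed around by exploiting the complete join between consecutive rungs. Everything else --- the distance computations in both families and the explicit identification of $H(G'_k)$ --- is a routine check.
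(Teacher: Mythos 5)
Your proof is correct, and for the second half it takes a genuinely different route from the paper. For the first family you use the same chain-of-$K_4$'s construction as the paper; the only difference is in how you show $E(H(G_k))=\emptyset$: the paper simply notes that every non-adjacent pair is joined by two edge-disjoint shortest paths (the ``top'' and ``bottom'' routes through the interior rungs), so no edge separates such a pair, whereas you give an explicit routing argument that dodges any candidate separating edge by re-choosing the path's vertex on an interior rung. Both arguments are sound; the paper's is a one-line observation, yours is more detailed but proves the same fact. For the second family the paper takes $G'=K_{1,k+2}$, the star, where every pair of edges forms the unique shortest path between the corresponding leaves, so $H(G')=K_{k+2}$ and $\cliquelb(G')=k+2$ falls out immediately; you instead use the friendship graph of $k+2$ triangles glued at a common vertex, which forces you to identify $H(G')$ as a complete multipartite graph on the spokes (with parts $\{ca_i,cb_i\}$) plus isolated triangle edges before reading off $\omega(H)=k+2$. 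Your analysis of that auxiliary graph is correct (in particular, $ca_i$ and $cb_i$ share no separated pair, so they are correctly placed in the same part, and each edge $a_ib_i$ is indeed isolated), but the star achieves the same gap $k+2-2=k$ with essentially no verification, so the paper's choice of witness is the more economical one; your construction does illustrate that the gap can also be realized by a $2$-connected (bridgeless) graph, which the star does not.
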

\begin{proof}
We first show that for any $k \in \N$ there exists a graph $G$ such that $\diam(G) - \cliquelb(G) = k$. Fix $k \in \N$, and let $G$ be the graph obtained by taking the union of $k+1$ distinct $K_4$ graphs, where the vertices the $i^{th}$ $K_4$ are labeled $v_{2(i-1)+j}$ for $j = 1, 2, 3, 4$. The first few graphs in this sequence are illustrated in  \autoref{fig:k4s}. It can be observed that $\diam(G) = k+1$. Additionally, each pair of non-adjacent vertices in $V(G)$ are connected by at least two edge disjoint shortest paths, thus no pair of non-adjacent vertices in $V(G)$ are separated by an edge. Since a pair of vertices in $V(G)$ can only be separated by an edge which is incident upon both of them, no pair of edges can separate the same pair of vertices. $E(H(G)) = \emptyset$, so $\cliquelb(G) = 1$ and $\diam(G) - \cliquelb(G) = k$ as desired.

To see that for any $k \in \N$ there also exists a graph $G'$ such that $\cliquelb(G') - \diam(G') = k$, fix some $k$ and let $G' = K_{1,k+2}$. Additionally, let $H' = H(G')$ and note that $|E(G')|=k+2$. Since each pair of edges in $E(G')$ form the unique shortest path between the leaves those edges are incident upon, $H' = K_{k+2}$. Thus $\cliquelb(G') = k+2$. Since $\diam(G') = 2$, we have that $\cliquelb(G') - \diam(G') = k$. 
\end{proof}

In the following proposition we make use of a formula shown by Chartrand et al.~\cite{chartrand2008}: for integers $s,t$ such that $1 \leq s \leq t$, $src(K_{s, t}) = \lceil \sqrt[s]{t} \rceil$.

\begin{proposition}\label{prop:arb-far}
For any $k \in \N$, there exist a graph $G$ such that $src(G) - \max(\diam(G), \cliquelb(G)) \geq k$.
\end{proposition}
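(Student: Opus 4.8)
The plan is to exhibit a single family of graphs—complete bipartite graphs $G = K_{s,t}$—for which $\diam(G)$ is bounded by a constant, $\cliquelb(G)$ grows only linearly in the parameters, but $src(G)$ grows without bound. The key leverage is the formula $src(K_{s,t}) = \lceil \sqrt[s]{t}\, \rceil$ quoted just before the statement: by fixing $s \geq 2$ and letting $t$ be large, $src(K_{s,t})$ behaves like $t^{1/s}$, which we can make as large as we wish, while the competing lower bounds stay small.

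First I would record that $\diam(K_{s,t}) = 2$ whenever $s, t \geq 2$, which is immediate since any two vertices in the same part are joined by a path of length $2$ through any vertex of the other part, and vertices in different parts are adjacent. Next I would bound $\cliquelb(K_{s,t}) = \omega(H(K_{s,t}))$ from above. The point is that in $K_{s,t}$ with $s,t \geq 2$, every pair of vertices is connected by many internally disjoint shortest paths—two adjacent vertices by the direct edge (and length-$2$ detours are not shortest), and two vertices $u_1,u_2$ in the same part of size $p$ by the $q$ paths through the $q$ vertices of the opposite part. Hence the only pairs $v_1,v_2$ that are separated by an edge are adjacent pairs with one endpoint of degree $1$, which occurs only when $s = 1$; for $s \geq 2$ no pair is separated by any edge, so $E(H) = \emptyset$ and $\cliquelb(K_{s,t}) = 1$. (I should double-check the degenerate counting, but the argument mirrors the $K_4$-union computation in \autoref{prop:incomparable}.) Therefore $\max(\diam(G), \cliquelb(G)) = 2$ for $G = K_{s,t}$ with $s,t \geq 2$.

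Finally I would choose parameters: fix $s = 2$ and set $t = (k+1)^2$, so that $src(K_{2,t}) = \lceil \sqrt{t}\, \rceil = k+1$. Then
\[
src(G) - \max(\diam(G), \cliquelb(G)) = (k+1) - 2 = k - 1,
\]
which is slightly short; to get a clean $\geq k$ I would instead take $t = (k+2)^2$, giving $src(K_{2,t}) = k+2$ and hence $src(G) - \max(\diam(G),\cliquelb(G)) = k+2-2 = k$, as required. (For $k = 0$ any $K_{2,4}$ works, and one can also just use $t$ large enough that $\lceil\sqrt t\,\rceil \geq k+2$.)

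The main obstacle is the middle step: arguing rigorously that $\cliquelb(K_{s,t}) = 1$, i.e., that no pair of edges separates a common pair of vertices. This reduces to the claim that no pair of vertices in $K_{s,t}$ (with $s,t\ge 2$) is separated by any single edge, which follows from the existence of at least two edge-disjoint shortest paths between every non-adjacent pair and the fact that a shortest path between an adjacent pair is just the connecting edge (so two edges cannot both lie on it). This is the same style of reasoning used in \autoref{prop:incomparable}, so I expect it to go through cleanly, but it is where the care is needed.
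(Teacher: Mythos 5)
Your proposal is correct and follows essentially the same route as the paper: it uses $G = K_{2,(k+2)^2}$, notes $\diam(G)=2$ and $\cliquelb(G)=1$ because no two edges can separate a common pair of vertices (multiple edge-disjoint shortest paths for non-adjacent pairs, a single-edge shortest path for adjacent ones), and then applies $src(K_{s,t})=\lceil\sqrt[s]{t}\rceil$ to get a gap of exactly $k$. The extra care you take in justifying $\cliquelb(K_{s,t})=1$ only makes explicit what the paper states more briefly.
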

\begin{proof}
Fix $k \in \N$, and let $G = K_{2,(k+2)^2}$. Then $\diam(G) = 2$, and since there are at least two edge disjoint shortest paths between each pair of vertices in $V(G)$, no pair of vertices in $H(G)$ are adjacent and so $\cliquelb(G) = 1$. Thus
$ src(G) - \max(\diam(G), \cliquelb(G)) = \big\lceil \sqrt[2]{(k+2)^2} \big\rceil - 2 = k. $   
\end{proof}


\section{Model and Enhancements} \label{sec:ip-land}

In this section, we introduce an integer program for computing $src(G)$ for general graphs $G$. We then describe a series of enhancements which leverage the results of \autoref{sec:lb} in order to improve the efficiency of the model in practice.

The remainder of the section is organized as follows. 
We present the integer programming model in  \autoref{subsec:model}.
\autoref{subsec:imp} describes a practical method for constructing the auxiliary graph $H$ introduced in \autoref{sec:lb}. 
\autoref{subsec:ip-imp} explains a method which uses the auxiliary graph $H$ to eliminate both variables and constraints from the integer programming model.
In \autoref{subsec:heuristic}, we present the first heuristic for the (strong) rainbow connection problem, and explain how to use the heuristic to further reduce the size of the integer programming formulation.
\autoref{subsec:bottom-up} describes a novel iterative scheme for computing $src(G)$ by combining our integer programming formulation with the lower bound of \autoref{sec:lb}. Finally, 
in \autoref{subsec:ip-ext} we propose several extensions of our model to different variants of the strong rainbow connection problem.

%

\subsection{An Integer Programming Model}\label{subsec:model}

Let $K=\{1,\dots,K_0\}$ be a set of colors for some $K_0\leq{m}$. We introduce the following binary variables:
\begin{itemize}
\item $x_{ek}=1$ if and only if edge $e\in{E(G)}$ is color $k\in{K}$.
\item $y_P=1$ if and only if path $P\in\mathcal{P}$ is rainbow (i.e.~all edges $e\in{E(P)}$ are different colors).
\item $z_k=1$ if and only if there exists an edge $e\in{E(G)}$ of color $k\in{K}$.
\end{itemize}

The following integer program computes the strong rainbow connection number of $G$, and a corresponding strong rainbow coloring.

\begin{varsubequations}{IP-1}\label{ip:master}
\begin{align}
\label{ip:master-obj}
z^*(G)=\min\;&\sum_{k\in{K}}z_k\\
\label{ip:master-sum-one}
\mathrm{s.t.}\;&\sum_{k\in{K}}x_{ek}=1&&\forall\;e\in{E(G)}\\
\label{ip:master-big-m}
&\sum_{e\in{E(P)}}x_{ek}+(|P|-1)y_P\leq|P|&&\forall\;P\in\mathcal{P},\;\forall\;k\in{K}\\
\label{ip:master-sum-y}
&\sum_{P\in\mathcal{P}_{uv}}y_P\geq1&&\forall\;u\neq{v}\in{V(G)}\\
\label{ip:master-vub}
&x_{ek}\leq{z_k}&&\forall\;e\in{E(G)},\;\forall\;k\in{K}\\
\label{ip:master-symmetry}
&z_k\geq{z_{k+1}}&&\forall\;k\in{K}\setminus\{K_0\}\\
&x_{ek},y_P,z_k\in\{0,1\}&&\forall\;e\in{E(G)},\;\forall\;k\in{K},\;\forall\;P\in\mathcal{P}
\end{align}
\end{varsubequations}

The objective \refp{ip:master-obj} minimizes the total number of colors used in the strong rainbow coloring. The constraints \refp{ip:master-sum-one} through \refp{ip:master-symmetry} are summarized as follows:
\begin{itemize}
\item The constraints \refp{ip:master-sum-one} require that each edge $e\in{E(G)}$ is assigned exactly one color. 
\item The constraints \refp{ip:master-big-m} enforce the logical requirement that if $y_P=1$ for some path $P\in\mathcal{P}$, then at most one edge in path $P$ is assigned color $k\in{K}$.
\item The constraints \refp{ip:master-sum-y} require that, for each pair of distinct vertices $u,v\in{V(G)}$, there exists a $(u,v)$-shortest-path $P$ such that $y_P=1$. 
\item The constraints \refp{ip:master-vub} enforce that if there exists an edge $e\in{E(G)}$ of color $k\in{K}$, then $z_k=1$ (i.e.~color $k\in{K}$ is counted in the objective value \refp{ip:master-obj}).
\item Constraints \refp{ip:master-symmetry} are symmetry breaking constraints which require the number of colors used in the strong rainbow coloring to be consecutive starting at $k=1$. While these constraints are not necessary to ensure a correct model, we find that they have significant computational benefits.
\end{itemize}
The following proposition certifies the correctness of the model \refp{ip:master}.

\begin{proposition}
For any graph $G$, $z^*(G)=src(G)$. Moreover, if $(x,y,z)$ is a feasible solution to \refp{ip:master}, then the coloring $f:E\to{K}$ with $f(e)=\sum_{k\in{K}}kx_{ek}$ (i.e.,~which maps an edge $e$ to the unique $k$ with $x_{ek}=1$) strongly rainbow connects $G$.
\end{proposition}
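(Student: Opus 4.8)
The plan is to prove the two inequalities $src(G)\le z^*(G)$ and $z^*(G)\le src(G)$ separately; the bulk of the argument is translating feasible solutions of \refp{ip:master} into strong rainbow colorings and back.

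First I would prove the ``moreover'' statement, which simultaneously yields $src(G)\le z^*(G)$. Let $(x,y,z)$ be any feasible solution. By \refp{ip:master-sum-one} there is, for each edge $e$, exactly one $k\in K$ with $x_{ek}=1$, so the map $f(e)=\sum_{k\in K}k\,x_{ek}$ is well defined. To see that $f$ strongly rainbow connects $G$, fix distinct $u,v\in V(G)$: by \refp{ip:master-sum-y} some $P\in\mathcal{P}_{uv}$ (hence a shortest $(u,v)$-path) has $y_P=1$, and substituting $y_P=1$ into \refp{ip:master-big-m} gives $\sum_{e\in E(P)}x_{ek}\le 1$ for every color $k$; thus no color appears on two edges of $P$, so $P$ is rainbow with respect to $f$. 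Since $u,v$ were arbitrary, $f$ is a strong rainbow coloring. Moreover \refp{ip:master-vub} forces $z_k=1$ whenever some edge receives color $k$, so the number of colors actually used by $f$ is at most $\sum_{k\in K}z_k$; applying this to an optimal solution gives $src(G)\le z^*(G)$.

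For the reverse inequality I would start from an optimal strong rainbow coloring and build a matching feasible point. Let $f^\star$ use exactly $src(G)$ colors; after relabeling we may assume it uses precisely the colors $1,\dots,src(G)$ (here one uses $K_0\ge src(G)$, which is available since $src(G)\le m$). Set $x_{ek}=1$ iff $f^\star(e)=k$, set $z_k=1$ iff $k\le src(G)$, and set $y_P=1$ iff $P$ is rainbow with respect to $f^\star$. Then \refp{ip:master-sum-one}, \refp{ip:master-vub} and \refp{ip:master-symmetry} are immediate; \refp{ip:master-big-m} holds because when $y_P=1$ the path $P$ is rainbow (so each color appears on at most one of its edges) and when $y_P=0$ the inequality is trivial; and \refp{ip:master-sum-y} holds because, for every pair $u\ne v$, the definition of a strong rainbow coloring supplies a rainbow shortest $(u,v)$-path, whose $y$-variable is then $1$. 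This solution has objective $\sum_k z_k=src(G)$, so $z^*(G)\le src(G)$, and combining the two inequalities gives $z^*(G)=src(G)$.

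The proof is essentially bookkeeping; the only points needing care are the interaction with the symmetry-breaking constraints \refp{ip:master-symmetry} in the second part (one must relabel the colors of $f^\star$ to an initial segment $\{1,\dots,src(G)\}$ of $K$ before reading off the $z$-variables) and the implicit requirement that $K$ be large enough, i.e.\ $K_0\ge src(G)$, so that an optimal coloring is representable at all.
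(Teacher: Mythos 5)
Your proof is correct. The paper actually states this proposition without giving any proof, and your two-directional verification---reading a strong rainbow coloring off any feasible $(x,y,z)$ via \refp{ip:master-sum-one}, \refp{ip:master-big-m}, \refp{ip:master-sum-y}, \refp{ip:master-vub}, and conversely encoding an optimal coloring (relabeled to an initial segment of colors so that \refp{ip:master-symmetry} holds) as a feasible point of objective value $src(G)$---is exactly the routine argument the authors leave implicit, including your correct caveat that the claim presupposes $K_0\geq src(G)$ so that \refp{ip:master} is feasible.
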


We note two relevant features of the integer program \refp{ip:master}. First, \refp{ip:master} utilizes a total of $|\mathcal{P}| + (m + 1)|K|$ binary variables and $(|\mathcal{P}| + m + 1)|K| + \binom{n}{2} + m - 1$ constraints. Unfortunately, $O(2^n)$ is a tight upper bound for $|\mathcal{P}|$ in general graphs. Thus at worst, the integer program \refp{ip:master} may have an exponential number of rows and columns. In spite of the potentially large size of \refp{ip:master}, we demonstrate in \autoref{sec:computation} that for graphs appearing in several applications, the cardinality of the set $\mathcal{P}$ is sufficiently small as to render \refp{ip:master} computationally tractable. Second, \refp{ip:master} is feasible if and only if $K_0\geq{src(G)}$. Consequently, building a feasible model requires a valid upper bound on the strong rainbow connection number $src(G)$. While the na\"ive choice of $K_0=m$ is valid for any graph $G$, we introduce a heuristic in \autoref{subsec:heuristic} which computes a much tighter upper bound, thus reducing the size of integer program \refp{ip:master} and improving its computational performance.

In addition, by replacing $\mathcal{P}$ with the set of \textit{all} paths in $G$ (rather than the set of \textit{shortest} paths), \refp{ip:master} can be used to compute the rainbow connection number $rc(G)$. In general, the total number of paths in a graph $G$ is much larger than the number of shortest paths in $G$. Moreover, as we discuss throughout the paper, the set $\mathcal{P}$ of shortest paths contains structure which can be exploited for computation. 
Consequently, we do not consider the computation of $rc(G)$ any further.

\subsection{Constructing the Auxiliary Graph}
\label{subsec:imp}

In this section, we explain how to construct the auxiliary graph $H$ introduced in \autoref{sec:lb}.
The techniques described in this section will be used throughout the remainder of the paper to improve the computational performance of the model \refp{ip:master}.
The construction of $H$ proceeds in three steps. First, for each $u\in{V(G)}$ we construct a directed graph $D_u$, which is related to the directed graph $D_{uv}$ introduced in \autoref{sec:lb}. Second, we use the set directed graphs $\{ D_u: u \in V(G)\}$ to compute the set of separating edges in $G$. Third, we use the set of separating edges to construct the auxiliary graph $H$. We will show how this construction also allows us to compute the set of separating vertices in $G$, as well as $r_{uv}=|\mathcal{P}_{uv}|$, the number distinct shortest $(u,v)$-paths in $G$. These objects will be used in the computational enhancements introduced in \autoref{subsec:ip-imp} and \autoref{subsec:heuristic}, respectively.

Although they play a helpful role in the analysis of \autoref{sec:lb}, it is not necessary to explicitly compute all $n(n-1)$ directed graphs $D_{uv}$.
Instead, we compute a single auxiliary directed graph $D_u$ for each $u\in{V(G)}$, defined as follows. Let $D_u=(V(G),E_D)$ where the set of directed edges $E_D=\{(v_1,v_2):v_1v_2\in{E(G)}\text{ and }d(u,v_1)+1=d(u,v_2)\}$. 
For fixed $u$, the directed graph $D_u$ can be constructed in $O(n+m)$ time by modifying the breadth first search algorithm to allow a vertex to be discovered by multiple vertices of the previous depth. An example of the directed graphs $D_u$ and their relation to the directed graphs $D_{uv}$ is shown in \autoref{fig:dag-example}.

\begin{figure}[t]\centering
\begin{tikzpicture}[scale=.90]
\definecolor{lgray}{HTML}{A1A0A0}

\tikzstyle{blacknode} = [draw,circle,minimum size=5pt,inner sep=0pt, fill]
\tikzstyle{whitenode} = [draw,circle,minimum size=5pt,inner sep=0pt, thick]
\tikzstyle{lightnode} = [draw,circle,minimum size=5pt,inner sep=0pt, fill, color=lgray]
\tikzstyle{dotedge} = [very thick, dotted]
\tikzstyle{solidedge} = [thick]
\tikzstyle{boldedge} = [line width=4pt]
\tikzstyle{lightedge} = [color=lgray]
\tikzstyle{soliddiredge} = [thick, -Triangle]
\tikzstyle{bolddiredge} = [line width=4pt, -Triangle]
\tikzstyle{lightdiredge} = [color=lgray, -Triangle]

\newcommand{\DrawG}[3]{
    \pgfmathsetmacro{\x}{#1} 
    \pgfmathsetmacro{\y}{#2} 
    \pgfmathsetmacro{\h}{#3} 

    \foreach \i in {1,...,5} { 
        \node[blacknode] (a\i) at (\x,\y+\i*\h-\h) {};
    }
    \foreach \i in {1,...,4} { 
        \node[blacknode] (b\i) at (\x+\h,\y+\i*\h) {};
    }
    \foreach \i in {1,...,3} { 
        \node[blacknode] (c\i) at (\x+2*\h,\y+2*\i*\h-2*\h) {};
    }

    \node[label=left:{$u$}] at (a4) {};
    \node[label=right:{$v_1$}] at (c3) {};
    \node[label=right:{$v_2$}] at (b1) {};
    \node[label=above:{$w$}] at (b4) {};

    \draw[solidedge] (a1) -- (a2);
    \draw[solidedge] (a2) -- (a3);
    \draw[solidedge] (a3) -- (a4);
    \draw[solidedge] (a4) -- (a5);
    \draw[solidedge] (b1) -- (b2);
    \draw[solidedge] (b2) -- (b3);
    \draw[solidedge] (b3) -- (b4);
    \draw[solidedge] (c1) -- (c2);
    \draw[solidedge] (c2) -- (c3);
    \draw[solidedge] (a1) -- (c1);
    \draw[solidedge] (a2) -- (b1);
    \draw[solidedge] (a3) -- (b2);
    \draw[solidedge] (b2) -- (c2);
    \draw[solidedge] (a4) -- (b3);
    \draw[solidedge] (a5) -- (b4);
    \draw[solidedge] (b4) -- (c3);
    \draw[solidedge] (a2) -- (b2);
    \draw[solidedge] (c1) -- (b1);
} 

\newcommand{\DrawD}[3]{
    \pgfmathsetmacro{\x}{#1} 
    \pgfmathsetmacro{\y}{#2} 
    \pgfmathsetmacro{\h}{#3} 

    \node[blacknode] (a1) at (\x,\y) {}; 
    \foreach \i in {1,...,3} { 
        \node[lightnode] (b\i) at (\x+\h,\y+\i*\h-2*\h) {};
    }
    \foreach \i in {1,...,3} { 
        \node[lightnode] (c\i) at (\x+2*\h,\y+\i*\h-2*\h) {};
    }
    \foreach \i in {1,...,4} { 
        \node[lightnode] (d\i) at (\x+3*\h,\y+\i*\h-2.5*\h) {};
    }
    \node[lightnode] (e1) at (\x+4*\h,\y-0.5*\h) {}; 

    \node[blacknode] at (b2) {};
    \node[blacknode] at (b3) {};
    \node[blacknode] at (c3) {};
    \node[blacknode] at (d4) {};

    \node[label={[text=black]above:{\footnotesize$1$}}] at (a1) {};

    \node[label={[text=lgray]above:{\footnotesize$1$}}] at (b1) {};
    \node[label={[text=black]above:{\footnotesize$1$}}] at (b2) {};
    \node[label={[text=black]above:{\footnotesize$1$}}] at (b3) {};

    \node[label={[text=lgray]above:{\footnotesize$1$}}] at (c1) {};
    \node[label={[text=lgray]above:{\footnotesize$2$}}] at (c2) {};
    \node[label={[text=black]above:{\footnotesize$2$}}] at (c3) {};

    \node[label={[text=lgray]below:{\footnotesize$1$}}] at (d1) {};
    \node[label={[text=lgray]above:{\footnotesize$3$}}] at (d2) {};
    \node[label={[text=lgray]above:{\footnotesize$2$}}] at (d3) {};
    \node[label={[text=black]above:{\footnotesize$2$}}] at (d4) {};

    \node[label={[text=lgray]below:{\footnotesize$6$}}] at (e1) {};

    \node[label=left:{$u$}] at (a1) {};
    \node[label=right:{$v_1$}] at (d4) {};
    \node[label={[text=lgray]below:{$v_2$}}] at (d2) {};
    \node[label={[label distance=-2pt]330:$w$}] at (c3) {};

    \draw[lightdiredge] (a1) -- (b1);
    \draw[soliddiredge] (a1) -- (b2);
    \draw[soliddiredge] (a1) -- (b3);

    \draw[lightdiredge] (b1) -- (c1);
    \draw[lightdiredge] (b1) -- (c2);
    \draw[lightdiredge] (b2) -- (c2);
    \draw[soliddiredge] (b2) -- (c3);
    \draw[soliddiredge] (b3) -- (c3);

    \draw[lightdiredge] (c1) -- (d1);
    \draw[lightdiredge] (c1) -- (d2);
    \draw[lightdiredge] (c2) -- (d2);
    \draw[lightdiredge] (c2) -- (d3);
    \draw[soliddiredge] (c3) -- (d4);

    \draw[lightdiredge] (d1) -- (e1);
    \draw[lightdiredge] (d2) -- (e1);
    \draw[lightdiredge] (d3) -- (e1);
} 

\newcommand{\DrawDNew}[3]{
    \pgfmathsetmacro{\x}{#1} 
    \pgfmathsetmacro{\y}{#2} 
    \pgfmathsetmacro{\h}{#3} 

    \node[blacknode] (a1) at (\x,\y) {}; 
    \foreach \i in {1,...,3} { 
        \node[lightnode] (b\i) at (\x+\h,\y+\i*\h-2*\h) {};
    }
    \foreach \i in {1,...,3} { 
        \node[lightnode] (c\i) at (\x+2*\h,\y+\i*\h-2*\h) {};
    }
    \foreach \i in {1,...,4} { 
        \node[lightnode] (d\i) at (\x+3*\h,\y+\i*\h-2.5*\h) {};
    }
    \node[lightnode] (e1) at (\x+4*\h,\y-0.5*\h) {}; 

    \node[blacknode] at (b1) {};
    \node[blacknode] at (b2) {};
    \node[blacknode] at (c1) {};
    \node[blacknode] at (c2) {};
    \node[blacknode] at (d2) {};

    \node[label={[text=black]above:{\footnotesize$1$}}] at (a1) {};

    \node[label={[text=black]above:{\footnotesize$1$}}] at (b1) {};
    \node[label={[text=black]above:{\footnotesize$1$}}] at (b2) {};
    \node[label={[text=lgray]above:{\footnotesize$1$}}] at (b3) {};

    \node[label={[text=black]above:{\footnotesize$1$}}] at (c1) {};
    \node[label={[text=black]above:{\footnotesize$2$}}] at (c2) {};
    \node[label={[text=lgray]above:{\footnotesize$2$}}] at (c3) {};

    \node[label={[text=lgray]below:{\footnotesize$1$}}] at (d1) {};
    \node[label={[text=black]above:{\footnotesize$3$}}] at (d2) {};
    \node[label={[text=lgray]above:{\footnotesize$2$}}] at (d3) {};
    \node[label={[text=lgray]above:{\footnotesize$2$}}] at (d4) {};

    \node[label={[text=lgray]below:{\footnotesize$6$}}] at (e1) {};

    \node[label=left:{$u$}] at (a1) {};
    \node[label={[text=lgray]right:{$v_1$}}] at (d4) {};
    \node[label={[text=black]below:{$v_2$}}] at (d2) {};
    \node[label={[text=lgray,label distance=-2pt]330:$w$}] at (c3) {};

    \begin{scope}[every path/.style=lightdiredge]
        \draw (a1) -- (b3);
        \draw (b2) -- (c3);
        \draw (b3) -- (c3);
        \draw (c3) -- (d4);
        \draw (c1) -- (d1);
        \draw (c2) -- (d3);
        \draw (d1) -- (e1);
        \draw (d2) -- (e1);
        \draw (d3) -- (e1);
    \end{scope}

    \begin{scope}[every path/.style=soliddiredge]
        \draw (a1) -- (b1);
        \draw (a1) -- (b2);
        \draw (b1) -- (c1);
        \draw (b1) -- (c2);
        \draw (b2) -- (c2);
        \draw (c1) -- (d2);
        \draw (c2) -- (d2);
    \end{scope}

} 

\DrawG{0}{0}{1.1}
\DrawD{3.3}{2.2}{1.3}
\DrawDNew{9.5}{2.2}{1.3} 
\end{tikzpicture}
\caption{An example of a graph $G$ (left), the directed graph $D_u$ (center and right), $D_{uv_1}$ (center, bold), and $D_{uv_2}$ (right, bold). Each vertex $v\in{V(D_u)}$ corresponds to a vertex in $V(G)$, and is labeled with $r_{uv}$, the number of shortest $(u,v)$-paths in $G$. For this example, passing $u$ and $v_1$ to \autoref{alg:preprocessing} returns $V_\mathrm{sep}=\{w\}$, $E_\mathrm{sep}=\{wv_1\}$ and $r_{uv_1}=2$.}
\label{fig:dag-example}
\end{figure}

\begin{Ualgorithm}[ht!]
\DontPrintSemicolon
\KwData{Distinct, non-adjacent vertices $u,v\in{V(G)}$, and the directed graph $D_u$.}
\KwResult{$E_\mathrm{sep}$ (the set of edges separating $u$ and $v$); $V_\mathrm{sep}$ (the set of vertices separating $u$ and $v$); $r_{uv}$ (the number of distinct shortest $(u,v)$-paths in $G$).}
$L\gets\{v\}$\;
$r_v\gets1$\;
$V_\mathrm{sep}\gets\emptyset$\;
$E_\mathrm{sep}\gets\emptyset$\;
\While{$u\not\in{L}$}{
    $\displaystyle N\gets\bigcup_{j\in{L}}N^+(j)$\;
    \ForEach{$j\in{N}$\label{line:r1}}{
        $\displaystyle r_j\gets\sum_{i\in{L}\cap{N^-(j)}}r_i$\;\label{line:r2}
    }
    \If{$|L|=|N|=1$}{
        $e'\gets$ the directed edge in $E(D_u)$ connecting the unique element of $N$ to the unique element of $L$\;
        $E_\mathrm{sep}\gets E_\mathrm{sep}\cup\{e'\}$\;
    }
    \If{$|N|=1$ \And $u\not\in{N}$}{
        $V_\mathrm{sep}\gets V_\mathrm{sep}\cup{N}$\; 
    }
    $L\gets{N}$\;
}
\Return{$V_\mathrm{sep}$, $E_\mathrm{sep}$, $r_u$}
\caption{Algorithm to compute the separating edges and vertices for a fixed pair of distinct, non-adjacent vertices $u$ and $v$. If lines \ref{line:r1} and \ref{line:r2} are included, the algorithm also counts $r_{uv}$, the number of distinct shortest $(u,v)$-paths in $G$. Upon termination, for each $j\in{V(G)}$, the value $r_j$ equals the number of shortest $(j,v)$-paths in $G$. For each $j\in{V(G)}$, the set $N^+(j)$ (resp.~$N^-(j)$) is the set of in-neighbors (resp.~out-neighbors) of $j$ in the directed graph $D_u$.}
\label{alg:preprocessing}
\end{Ualgorithm}

Given the directed graph $D_u$ for some $u\in{V(G)}$,
\autoref{alg:preprocessing} computes the set of edges $E_\mathrm{sep}$ which separate $u$ from any non-adjacent vertex $v\in{V(G)}\setminus\{u\}$. As a byproduct, the algorithm also computes the set of separating vertices $V_\mathrm{sep}$, and the  number $r_{uv}=|\mathcal{P}_{uv}|$ of shortest $(u,v)$-paths in $G$. For a fixed pair of vertices $u$ and $v$, the sets $E_\mathrm{sep}$ and $V_\mathrm{sep}$ can be computed in $O(n+m)$ time using \autoref{alg:preprocessing} with lines \ref{line:r1} and \ref{line:r2} removed.
Computing $r_{uv}$ by adding lines \ref{line:r1} and \ref{line:r2} increases the time complexity of \autoref{alg:preprocessing} to $O(n^2+nm)$.

%
Finally, we construct the graph $H$. For all $v_1\neq{v_2}\in{V(G)}$, let $E'_{v_1v_2}$ denote the set of edges which separate $v_1,v_2$ in $G$ (computed using \autoref{alg:preprocessing} in $O(n+m)$ time). For each pair $e_1,e_2$ of distinct elements of $E'_{v_1v_2}$, we add the undirected edge $e_1e_2$ to $H$. For a fixed pair of vertices $v_1,v_2$, $|E'_{v_1v_2}|=O(m^2)$, and thus $H$ can be constructed in $O(n^2(n+m^2))$ time.

\subsection{Variable and Constraint Elimination} \label{subsec:ip-imp}

The results introduced in \autoref{sec:lb} suggest two methods by which we can eliminate variables and constraints from the model \refp{ip:master}. First, by \autoref{cor:edges-different}, for any single $U\subseteq{V(H)}=E(G)$ that is a clique in $H$, we may enforce \textit{a priori} that each edge in $U$ is a different color---i.e., we fix the variables $x_{ek}$ for all $e\in{U}$ and $k\in{K}$. Naturally, this technique eliminates the largest number of variables when $U$ is a maximum clique in $H$, corresponding to the bound $\cliquelb(G)$. In addition, this choice also helps to break symmetry inherent to the model.

Second, per \autoref{prop:cutvx-connectivity}, if a vertex $u$ separates a pair of vertices $v_1,v_2$ and at least one $(v_1,v_2)$-shortest-path is rainbow with respect to an edge coloring $f$, then there must exist shortest paths connecting both $v_1, u$ and $u, v_2$ which are rainbow with respect to $f$. Thus as long as it is enforced that the edge coloring indicated by the $x_{ek}$ variables strongly rainbow connects vertices $v_1, v_2$, it is not necessary to explicitly enforce that $v_1, u$ and $u, v_2$ are connected by rainbow shortest paths. Hence, we may eliminate the variables $y_P$ and constraints \refp{ip:master-big-m} corresponding to paths $P\in\mathcal{P}_{v_1u}\cup\mathcal{P}_{uv_2}$, as well as the constraints \refp{ip:master-sum-y} corresponding to the vertex pairs $v_1, u$ and $u, v_2$.
In \autoref{sec:computation} we provide empirical evidence that this step substantially reduces the size of the formulation \refp{ip:master} for some classes of graphs.

\subsection{A Fast Random Heuristic for (Strong) Rainbow Connection}\label{subsec:heuristic}

To the best knowledge of the authors, no heuristic method has been proposed for strong rainbow connection in general graphs. Polynomial heuristic methods have been proposed for several related problems however, including the rainbow vertex connection problem~\cite{rvc-heur} and the rainbow connection problem, both in general graphs \cite{rc-heur} and in the special case of maximal outer planer graphs~\cite{MOP-heuristic}. Because the upper bounds provided by heuristic methods can often be used to significantly assist with computational methods, it is desirable to also have a heuristic method for strong rainbow connection. Unfortunately, since the previously mentioned heuristics either solve related problems or determine upper bounds on $rc(G)$, which is a lower bound on $src(G)$, they are not immediately applicable to our study. We next propose a new fast random heuristic method for computing the strong rainbow connection numbers of general graphs. 

\begin{Ualgorithm}[t]
\DontPrintSemicolon
\KwData{A simple connected graph $G$, and an integer $maxIter$.}
\KwResult{$f$ (a valid strong rainbow coloring of $G$) and $best$ (the number of colors used by $f$).}
$best\gets m+1$\label{line:2}\;
\ForEach{$i = 1, \dots, maxIter$\label{line:3}}{
    $k\gets0$\label{line:1}\;
    Fix a random shortest $(u,v)$-path $P_{uv}$ for all $u\neq{v}$ in $V(G)$\label{line:fix-paths}\;
    $free\gets E(G)$\;
    $fixed\gets\emptyset$\;
    \While{$free \neq \emptyset$}{
        randomly select an edge $e$ in $free$\;
        $free \gets free \backslash \{e\}$\;
        $K' \gets \{1, \dots, k\}$\;
        \ForEach{$u \neq v \in V(G)$}{
            \If{$K' = \emptyset$}{
                 \Goto line 17\;
            }
            \If{$e \in E(P_{uv})$}{
                \ForEach{$e' \in E(P_{uv}) \cap fixed$}{
                    $K' \gets K' \backslash \{f(e')\}$\;
                }
            }
        }
        \eIf{$K' \neq \emptyset$}{
            $f(e) \gets$ randomly chosen element of $K'$\;
        }{
            $k \gets k+1$\;
            \If{$k\geq best$}{
                \Goto line 25\;
            }
            $f(e) = k$\;
        }
        $fixed \gets fixed \cup \{e\}$\;
    }
    $best = \min(best, k)$\;
}
\Return{$f$, $best$}
\caption{Fast Random Strong Rainbow Coloring Heuristic.}
\label{alg:1} 
\end{Ualgorithm}

The heuristic, presented in \autoref{alg:1}, begins by randomly selecting a single shortest $(u,v)$-path $P_{uv}\in\mathcal{P}_{uv}$ for all $u\neq{v}\in{V(G)}$
(in the coloring returned upon termination, each of the selected paths $P_{uv}$ will be a strong rainbow path).
Throughout execution, the heuristic counts the number of colors $k$ used in the strong rainbow coloring (initially $k=0$). The heuristic first randomly selects an uncolored edge $e\in{E(G)}$. It then checks whether edge $e$ can be colored using a color that has already been used previously (i.e.~one of the colors $\{1,\dots,k\}$), without causing any selected path to use the same color twice. If so, then $e$ is colored with such a color. Otherwise, $e$ is colored with a new color, and $k$ is increased by one. A new uncolored edge $e\in{E(G)}$ is selected, and the process repeats until all edges in $E(G)$ are colored.
In order to produce better solutions, the
entire randomized process can be repeated multiple times (controlled by the parameter \texttt{maxIter}).

%
This heuristic requires the selection of a path $P_{v_1v_2}\in\mathcal{P}_{v_1v_2}$ for all $v_1\neq{v_2}\in{V(G)}$. We now describe a method to uniformly sample elements from $\mathcal{P}_{v_1v_2}$. Let $r_{v_1v_2}=|\mathcal{P}_{v_1v_2}|$ equal the number of distinct shortest $(v_1,v_2)$-paths in $G$---in \autoref{subsec:imp}, we explained how these values can be quickly determined in $O(n^2+nm)$ time. 
%
To sample a shortest $(v_1,v_2)$-path, we begin at vertex $v_2$ in the directed graph $D_{v_1}$, and move back to vertex $v_1$ as follows. Set $u_0=v_2$ to be the initial vertex in the path. For each $i=1,\dots,d(v_1,v_2)$, the $i^\mathrm{th}$ vertex $u_i$ in the path is sampled from the set $N^+(u_{i-1})$, where vertex $u\in{N^+(u_{i-1})}$ is chosen with probability $r_{v_1u}/\sum_{u'\in{N^+(u_{i-1})}}r_{v_1 u'}$. An example of the labels $r_{uv}$ is shown in \autoref{fig:dag-example}.


Though not included in the provided pseudocode for readability, it is possible to improve the performance of \autoref{alg:1} by initially coloring an edge set $U\subseteq V(H) = E(G)$ which is a clique in $H$ with distinct colors and adjusting the initial values of $fixed, free, k,$ and $f$ accordingly. Doing so creates a larger initial set of colors, providing additional flexibility in color choice and modestly improving the quality of solutions. Because the maximum cardinality clique in $H$ is already computed during the model preprocessing outlined in \autoref{subsec:ip-imp}, it can easily be reused here. Alternatively if a worst-case polynomial time algorithm is desired, the heuristic could similarly be improved by initially coloring one of the fixed $\diam(G)$ length paths chosen in line \ref{line:fix-paths} instead.

It is also possible to extend the proposed heuristic to the standard rainbow connection problem. While any coloring generated by this heuristic must strongly rainbow connect $G$, and thus also be a valid rainbow coloring of $G$, additional rainbow connecting edge colorings may be generated by fixing randomly chosen $(u,v)$-paths for each $u \neq v \in V(G)$ rather than fixing randomly chosen $(u,v)$-\textit{shortest}-paths.

\subsection{The Bottom-Up Approach}\label{subsec:bottom-up}

We now introduce a technique for computing $src(G)$ which takes advantage of the fact that the lower bound $\omega'(G)$ presented in \autoref{sec:lb} is often very strong in practice and can be computed quite quickly despite its theoretical hardness. We refer to this method as the bottom-up approach. The method proceeds as follows: we first compute the lower bound $\ell{b}=\max(\diam(G),\omega'(G))$ for $src(G)$. Next, we solve \refp{ip:master} with $|K|=\ell{b}$ (i.e.~with $\ell{b}$ available colors). If the resulting model is infeasible, we increase $|K|$ by 1, and repeat this process until we obtain a feasible instance. The first value of $|K|$ for which \refp{ip:master} is feasible equals $src(G)$. This method clearly terminates in finite time. 

The success of this method in practice relies on two factors. First, because \refp{ip:master} has $O(|\mathcal{P}| + (m+1) |K| )$ variables, $O((|\mathcal{P}| + m + 1) |K|)$ constraints, and $|\mathcal{P}|$ is relatively large, model size grows quickly with $|K|$. To ensure feasibility of \refp{ip:master}, we require $K_0=|K|\geq src(G)$, at best choosing $K_0$ to be equal to the best known upper bound on $src(G)$. In contrast, the bottom-up approach initializes $K_0$ to be equal to the best known lower bound, often formulating and solving a considerably smaller model. Second, the bound $\max(\diam(G),\omega'(G))$ is often very close to $src(G)$, so the bottom-up approach requires very few iterations to converge. 
In \autoref{sec:computation} we demonstrate that this method is very effective in practice. 

We note that the above technique does not require the exact computation of $\omega'(G)$---any lower bound on $src(G)$ is sufficient. Since $\omega'(G)$ often provides a particularly good bound on $src(G)$ and can be quickly computed, we choose to utilize it here. Moreover, while larger cliques in $H$ produce tighter bounds on $\omega'(G)$ and thus on $src(G)$, a clique of any size can in fact be used. In particular, any existing max clique heuristic (e.g.~\cite{benlic2013breakout, ILS-VND, wu2012multi}) could also be used to find a large clique in $H$. Due to the relatively little computational time needed to solve max clique exactly for the problem instances we consider, in \autoref{sec:computation}, we chose to compute $\omega'(G)$ exactly.

\subsection{Model Extensions for Related Problems}\label{subsec:ip-ext}

In this section, we describe how the integer program \refp{ip:master} can be extended to solve several popular variants of the rainbow connection problem which have been introduced in the literature: the rainbow $k$-connection problem~\cite{chartrand2009}, rainbow vertex connection problem~\cite{rvc}, the strong rainbow vertex connection problem~\cite{srvc}, and the very strong rainbow connection problem~\cite{vsrc} (see Li et al.~\cite{li2013a} for a review of several of these variants and many of their known results).

Among these variants, perhaps most similar to the strong rainbow connection problem is the very strong rainbow connection problem. In this problem we also consider an edge coloring $f:E(G) \to \{1, \dots, k'\}$ with $k' \in \mathbb{N}$, but for each pair of distinct vertices $u,v$, rather than requiring that at least one shortest $(u,v)$-path is rainbow with respect to $f$, we now require that \textit{every} $(u,v)$-shortest-path is rainbow with respect to $f$. The smallest $k' \in \mathbb{N}$ for which there exists an $f$ that strongly rainbow connects $G$ is known as the {\em very strong rainbow connection number} of $G$. \refp{ip:master} can be extended to this problem by simply requiring that $y_P = 1$ for all $P$ in $\mathcal{P}$. Thus all but a polynomial number of variables ($x_{e,k}$, $z_k, \: \forall e \in E(G), k \in K$) can be eliminated, constraints \refp{ip:master-sum-y} are necessarily satisfied and can therefore be eliminated, and constraints \refp{ip:master-big-m} reduce to set packing constraints.

The rainbow $k$-connection problem extends the standard rainbow connection problem by requiring that each pair of vertices are connected by at least $k$ internally vertex disjoint rainbow paths, with respect to an edge coloring $f:E(G) \to \{1, \dots, k'\}$. The {\em rainbow $k$-connectivity number} of $G$ is then the smallest $k'$ for which there exists an edge coloring $f$ that rainbow $k$-connects $G$. As previously indicated, \refp{ip:master} exactly solves the rainbow connection problem when the set $\mathcal{P}$ is extended to the set of all paths in $G$. \refp{ip:master} can be further extended to the rainbow $k$-connection problem by expanding each $\mathcal{P}_{uv}$ to be the set of all $(u,v)$-paths in $G$, replacing the right hand side of constraints \refp{ip:master-sum-y} with $k$, and adding additional constraints of the form:
$$ y_{P} + y_{P'} \leq 1, \qquad \qquad \forall u \neq v \in V(G), \forall P \neq P' \in \mathcal{P}_{uv}: V(P) \cap V(P') \backslash \{u,v\} \neq \emptyset. $$

Finally, the rainbow vertex connection problem and the strong rainbow vertex connection problems explore the notion of rainbow coloring in the case that vertices are colored rather than edges. In these settings we consider a vertex coloring $f: V(G) \to \{1, \dots, k'\}$ with $k' \in \mathbb{N}$. A pair of vertices $v_1 \neq v_2 \in V(G)$ are said to be {\em rainbow vertex connected} if there exists a $(v_1,v_2)$-path $P$ such that for any $u_1 \neq u_2 \in V(P)$, $f(v_1) \neq f(v_2)$. If every pair of vertices in $G$ is rainbow vertex connected, then $f$ is said to {\em rainbow vertex connect} $G$. If every pair of vertices in $G$ is rainbow vertex connected via a shortest path, then $f$ is also said to {\em strongly rainbow vertex connect} $G$. \refp{ip:master} can be modified for the strong rainbow vertex connection problem by replacing the $x_{e,k}$ variables with $x_{v,k}$ variables to indicate a vertex coloring rather than an edge coloring. The constraints \refp{ip:master-sum-one}, \refp{ip:master-big-m}, and \refp{ip:master-vub} can all be adjusted analogously to apply to vertex colorings rather than edge colorings. While this formulation solves the strong rainbow vertex connection problem, the standard rainbow vertex connection problem can also be solved by extending the set $\mathcal{P}$ to be the set of all paths in $G$, as in the case of the standard rainbow connection problem.

\section{Computational Results}\label{sec:computation}

In this section, we conduct a series of computational experiments to assess the performance of the methods we have introduced for calculating $src(G)$:
\begin{enumerate}
\item \textbf{Na\"ive Model:} In this test we directly solve the integer program \refp{ip:master}. We do not use any of the computational enhancements introduced in \autoref{subsec:ip-imp} (no variable fixing, constraint elimination, etc.). This method uses the heuristic solution (\autoref{subsec:heuristic}) as the value for $K_0$.
\item \textbf{Enhanced Model:} In this test we directly solve \refp{ip:master}, this time using the computational enhancements of \autoref{subsec:ip-imp}. The initial variable fixing was created with the maximum clique in $H$, computed using the Cliquer code~\citep{cliquer}. This method also uses the heuristic solution (\autoref{subsec:heuristic}) as the value for $K_0$.
\item \textbf{Bottom-Up:} This test computes $src(G)$ using the bottom-up approach introduced in \autoref{subsec:bottom-up}. The initial value for $K_0$ is equal to $\max(\diam(G),\cliquelb(G))$, where $\cliquelb(G)$ is again computed using Cliquer. This method also uses the computational enhancements of \autoref{subsec:ip-imp}. Unlike the previous two methods, the bottom-up approach does not benefit from an initial feasible solution or upper bound, and thus the heuristic in \autoref{subsec:heuristic} is not used.
\end{enumerate}

To test these methods, we consider the following three categories of graphs:
\begin{enumerate}
\item \textbf{Social and Infrastructure Graphs:} We consider several graphs which represent real-world social and communication networks\footnote{Publicly available in the Koblenz Network Collection~\cite{konect} and Network Repository~\cite{nr}.} \cite{surfers, jazz, lesmis, dolphins, football, adjnouns, radoslaw, karate}, and a set of standard IEEE graphs used to represent infrastructure networks\footnote{Publicly available at the Power Systems Test Case Achieve: \url{https://labs.ece.uw.edu/pstca/}.}. These graphs are similar to the communication networks which motivate the applications of the strong rainbow connection problem discussed in \autoref{sec:introduction}.
\item \textbf{Sparse Random Graphs:} We also consider two popular random graph models: the Erd\H{o}s-R\'enyi random graph model \cite{ER-graphs} and Watts-Strogatz random graph model \cite{watts-strogatz}.
The Watts-Strogatz model is particularly relevant to the strong rainbow connection problem as these graphs exhibit the so called `small world' property often observed in social graphs -- shortest paths are often quite short in length despite potentially large numbers of vertices. 
\item \textbf{Dense Random Bipartite Graphs:} As demonstrated in \autoref{tab:stats} in \autoref{subsec:experiments}, it is often the case that the lower bound given by $\max(diam(G),\cliquelb(G))$ is tight in the first two categories of graphs we investigate. Consequently, the bottom-up approach of \autoref{subsec:bottom-up} generally outperforms the other methods we consider. In order to assess potential limitations of the bottom-up approach when this lower bound is not tight, we also examine dense random bipartite graphs. 
Although these graphs do not resemble the real-world graphs which motivate the strong rainbow connection problem, this choice is motivated by \autoref{prop:arb-far}, which shows that the bound $\max(\diam(G),\cliquelb(G))$ can be arbitrarily poor in complete bipartite graphs of the form $K_{2,n}$.
\end{enumerate}

\subsection{Implementation Details}\label{subsec:imp-details}

Each test was executed on the same hardware, a laptop with 32 GB of RAM running Ubuntu version 19.10 with a 2.60 GHz i7 processor. All tests were performed using a 1 hour timeout limit. 
We used Python 3.7.4 and the Networkx package \cite{networkx} to implement the heuristic method and various preprocessing graph algorithms described in \autoref{sec:ip-land}. 
Gurobi \cite{gurobi}, version 9.0.2, was used to solve integer programs. 
Our code and the test instances we consider are publicly available at \url{github.com/dtmildebrath/rainbow}.

To compute cliques,
we considered the publicly available max (weight) clique solver Cliquer~\cite{cliquer} as well as a max weight independent set hybrid iterated local search heuristic (ILS-VND) proposed by Nogueira et al.~\cite{ILS-VND}. 
As identifying max weight cliques is $\mathcal{NP}$-hard in general, we expected that exactly computing maximum cliques would be less computationally effective than using a heuristic method such as ILS-VND. In our experiments however, we found that due to the sizes of the subproblems which we encounter in our tests and the low edge densities of the corresponding auxiliary graphs, very little time is needed to solve these subproblems exactly. As such, the reported results use Cliquer to 
compute max cliques,
and note that in the vast majority of our test instances less than 1\% of the total runtime was spent on this task.

Some of the social network graphs we considered as test cases included directed edges, loops, or parallel edges. Since these were often used to indicate repeated communications, such as emails sent from one party and received by another party or messages sent from a party to back to that same party, these graphs were modified so that loops were removed, parallel edges were reduced to a single edge, and directed edges were made undirected. In the case that a graph had multiple connected components, only the component with the largest number of vertices is considered. 

We also consider three random graph models.
The first is the Erd\H{o}s-R\'enyi random graph model, $G_{n,p}$, where the generated graph has $n$ vertices, and each pair of vertices are adjacent with probability $p$. The test instance labeled `ER\_$n_0$\_$p_0$\_$i$' is then the $i^\text{th}$ random graph generated with $n = n_0$ and $p = p_0/100$. The second model is the Watts-Strogatz random graph model with parameters $n$, $k$, $p$. In this model, $n$ indicates a number of nodes initially organized into a cycle. These nodes are then connected to their $k$ nearest neighbors and finally each edge is removed and replaced by an edge not in the graph, with probability $p$. These test instances are labeled such that `WS\_$n_0$\_$k_0$\_$p$\_$i$' is the $i^\text{th}$ random graph generated with $n=n_0$, $k=k_0$, and $p=p_0/100$. The third model is the bipartite analogue of the Erd\H{o}s-R\'enyi random graph model, $G_{n_1, n_2, p}$, where the generated graph is a bipartite graph with parts containing $n_1$ and $n_2$ vertices, and each pair of vertices which are not both in the same part are adjacent with probability $p$. Test instance `$BER$\_$n_a$\_$n_b$\_$p_0$\_$i$' is the $i^\text{th}$ random bipartite graph generated with $n_1=n_a, n_2=n_b$, and probability $p=p_0/100$.

Aside from the default parameters provided by the Gurobi solver, 
the only tunable parameter which appears in any of our methods is $maxIter$, the number of iterations for which the random heuristic introduced in \autoref{subsec:heuristic} is run. Despite its simplicity, we found that when combined with the lower bound $\cliquelb(G)$ the random heuristic often provides a surprisingly tight optimality gap with $maxIter = \lceil n /5\rceil$. As expected, runtime performance benefits diminish as $maxIter$ is increased and more search time needed for small improvements in solutions. In \autoref{tab:times} we provide the solutions returned by the heuristic with $maxIter= \lceil n /5\rceil$ and its total run time. 
For each test instance the heuristic was run once; the resulting solution was then used 
for all subsequent experiments,
and the time needed for the heuristic was added to the overall runtimes reported in \autoref{tab:times} as appropriate.

\subsection{Results and Discussion}\label{subsec:experiments}


\begin{table}\scriptsize\centering
\makebox[\textwidth][c]{ 
\begin{tabular}{c|cccc....}
\hline
Instance & $n$ & $src(G)$ & Init.~LB & Heur.~UB & \mc{Heur.~Time}  & \mc{Naive} & \mc{Enhanced} & \mc{Bott.~up} \\
\hline
ER\_80\_3.2\_0    & 80  & 19  & 19 & 27  & 0.521   & \mc{[14,\;21]} & 8.621            & \boldc{6}{053}   \\
ER\_80\_3.2\_1    & 80  & 16  & 16 & 27  & 0.525   & 1512.083       & 25.884           & \boldc{9}{226}   \\
ER\_80\_3.2\_2    & 80  & 21 & 21 & 26  & 0.510   & \mc{[16,\;21]} & 3.306            & \boldc{1}{740}   \\
ER\_80\_3.2\_3    & 80  & 24 & 24 & 30  & 0.474   & \mc{[20,\;24]} & 3.180            & \boldc{1}{584}   \\
ER\_80\_3.2\_4    & 80  & 25 & 25 & 29  & 0.489   & \mc{[24,\;25]} & 2.587            & \boldc{1}{519}   \\
\hline
ER\_80\_6\_0      & 80  & 8  & 8 & 18  & 1.310   & 2319.224       & 1887.042         & \boldc{210}{050} \\
ER\_80\_6\_1      & 80  & 10 & 10 & 21  & 1.013   & 1287.704       & 888.659          & \boldc{79}{950}  \\
ER\_80\_6\_2      & 80  & 8 & 8  & 18  & 1.444   & 1688.089       & 636.261          & \boldc{92}{896}  \\
ER\_80\_6\_3      & 80  & 9 & 9  & 19  & 1.361   & 871.583        & 141.416          & \boldc{2}{230}   \\
ER\_80\_6\_4      & 80  & 8 & 8  & 17  & 1.689   & 670.346        & 510.330          & \boldc{69}{112}  \\
\hline
ER\_100\_4\_0     & 100 & 13 & 13 & 26  & 2.520   & \mc{[13,\;14]} & 2986.063         & \boldc{317}{650} \\
ER\_100\_4\_1     & 100 & 11 & 11 & 24  & 3.004   & \mc{[11,\;13]} & \boldc{3295}{713} & \mc{[11,\;-]}    \\ 
ER\_100\_4\_2     & 100 & - & 10  & 25  & 3.186   & \mc{[10,\;16]} & \mc{[10,\;16]}   & \mc{[10,\;-]}    \\
ER\_100\_4\_3     & 100 & - & 10  & 24  & 2.689   & \mc{[10,\;14]} & \mc{[10,\;16]}   & \mc{[11,\;-]}    \\
ER\_100\_4\_4     & 100 & - & 10  & 24  & 2.959   & \mc{[10,\;12]} & \mc{[10,\;12]}   & \mc{[10,\;-]}    \\
\hline
ER\_100\_8\_0     & 100 & 7 & 7  & 16  & 6.905   & 1827.875       & \mc{[7,\;8]}     & \boldc{631}{530} \\
ER\_100\_8\_1     & 100 & 7 & 7  & 16  & 8.003   & 747.139        & 663.521          & \boldc{4}{161}   \\
ER\_100\_8\_2     & 100 & 6 & 6  & 17  & 6.647   & \mc{[6,\;7]}   & \mc{[6,\;7]}     & \boldc{1072}{940}\\
ER\_100\_8\_3     & 100 & 6 & 6  & 15  & 7.576   & 3108.940       & 3361.426         & \boldc{4}{582}   \\
ER\_100\_8\_4     & 100 & 6 & 6  & 16  & 7.530   & 3160.943       & \mc{[6,\;8]}     & \boldc{4}{178}   \\
\hline
WS\_100\_10\_1\_0 & 100 & 8 & 8  & 22  & 10.090  & \mc{[8,\;10]}  & \mc{[8,\;10]}    & \boldc{17}{050}  \\
WS\_100\_10\_1\_1 & 100 & 9 & 9 & 24  & 14.139  & \mc{[8,\;17]}  & \mc{[8,\;24]}    & \boldc{1612}{366}\\
WS\_100\_10\_1\_2 & 100 & - & 8  & 25  & 13.031  & \mc{[8,\;13]}  & \mc{[8,\;25]}    & \mc{[8,\;-]}     \\
WS\_100\_10\_1\_3 & 100 & 9 & 9  & 24  & 12.147  & \mc{[9,\;19]}  & \mc{[9,\;10]}    & \boldc{34}{599}  \\
WS\_100\_10\_1\_4 & 100 & - & 7  & 22  & 9.911   & \mc{[7,\;11]}  & \mc{[7,\;11]}    & \mc{[7,\;-]}     \\
\hline
WS\_100\_20\_1\_0 & 100 & 4 & 4  & 13  & 16.986  & 673.842        & 475.729          & \boldc{9}{915}   \\
WS\_100\_20\_1\_1 & 100 & 4 & 4  & 14  & 19.463  & 3068.778       & 879.595          & \boldc{11}{697}  \\
WS\_100\_20\_1\_2 & 100 & 5 & 5  & 14  & 18.251  & \mc{[3,\;5]}   & \mc{[4,\;5]}     & \boldc{13}{542}  \\
WS\_100\_20\_1\_3 & 100 & 5 & 5  & 14  & 16.293  & \mc{[4,\;5]}   & 1773.939         & \boldc{12}{433}  \\
WS\_100\_20\_1\_4 & 100 & 5 & 5  & 14  & 17.357  & \mc{[4,\;6]}   & \mc{[4,\;5]}     & \boldc{14}{775}  \\
\hline
WS\_150\_30\_1\_0 & 150 & 4 & 4  & 14  & 123.940 & \mc{[3,\;9]}   & \mc{[3,\;5]}     & \boldc{43}{778}  \\
WS\_150\_30\_1\_1 & 150 & 4 & 4  & 15  & 129.283 & \mc{[3,\;7]}   & \mc{[4,\;15]}    & \boldc{49}{698}  \\
WS\_150\_30\_1\_2 & 150 & 4 & 4  & 14  & 120.381 & \mc{[3,\;9]}   & \mc{[4,\;14]}    & \boldc{35}{356}  \\
WS\_150\_30\_1\_3 & 150 & 4 & 4  & 15  & 122.179 & \mc{[3,\;8]}   & \mc{[4,\;15]}    & \boldc{46}{268}  \\
WS\_150\_30\_1\_4 & 150 & 4 & 4  & 15  & 132.295 & \mc{[4,\;12]}  & \mc{[4,\;15]}    & \boldc{37}{361}  \\
\hline
ieee30            & 30  & 10 & 10  & 12  & 0.019   & 0.252          & 0.106            & \boldc{0}{090}   \\
ieee57            & 57  & 14 & 14 & 22  & 0.115   & 54.571         & 4.893            & \boldc{1}{470}   \\
ieee118           & 118 & 25 & 25 & 42  & 2.705   & \mc{[25,\;32]} & 269.386          & \boldc{225}{037} \\
ieee300           & 300 & (107) & 106 & 125 & 127.574 & \mc{[-,\;125]} & \mc{[106,\;108]} & \mc{[107,\;-]}   \\
\hline
karate            & 34  & 6  & 6 & 11  & 0.069   & 3.001          & 0.798            & \boldc{0}{282}   \\
surfers           & 43  & 3  & 3 & 6   & 0.401   & 1.470          & 1.145            & \boldc{0}{394}   \\
dolphins          & 62  & 10 & 10 & 17  & 0.524   & \mc{[9,\;10]}  & 6.444            & \boldc{1}{265}   \\
lesmis            & 77  & 21 & 21 & 27  & 1.741   & 138.235        & 5.694            & \boldc{3}{150}   \\
adjnoun           & 112 & 11 & 11 & 25  & 8.723   & \mc{[11,\;12]} & 1000.355         & \boldc{7}{827}   \\
football          & 115 & (5)& 5 & 16  & 14.334  & \mc{[5,\;8]}   & \mc{[5,\;8]}     & \mc{[5,\;-]}     \\
enron-emails      & 143 & 10 & 10 & 27  & 26.735  & \mc{[9,\;18]}   & \mc{[10,17\;]}     & \boldc{16}{389}  \\
rado-emails       & 167 & 24 & 24 & 24  & 214.839 & \mc{[4,\;24]}  & 309.058          & \boldc{58}{173}  \\
jazz-musicians    & 198 & -  & 6 & 22  & 416.739 & \mc{[5,\;22]}  & \mc{[6,\;22]}    & \mc{[6,\;-]}     \\
ca-netscience     & 379 & 89 & 89 & 118 & 1213.924 & \mc{[-,\;-]}  & \mc{[-,\;-]}     & \boldc{1475}{184} \\
\hline
\end{tabular}
} 
\caption{\footnotesize Performance comparison of proposed methods. The lower bound $\max(\diam(G), \cliquelb(G))$ is indicated as Init. LB and the upper bound obtained by \autoref{alg:1} is indicated as Heur. UB. A 1 hour time limit was used. If a method timed out before a solution was found, the best lower and upper bounds obtained within the timeout period (if any) are given in brackets: $[\mathrm{lb},\mathrm{ub}]$. The bottom-up approach does not produce an upper bound, and thus ub$=$``-'' for these tests. The fastest method for each instance is indicated in bold. $src$ values reported in parentheses were obtained using extended runtimes.}
\label{tab:times}
\end{table}

\begin{table}\scriptsize\centering
\begin{tabular}{c|rrccccrrr}
    \hline
    Instance & $n$ & $m$ & $src(G)$ & $\cliquelb(G)$ & $\diam(G)$ & $dens(H)$ & $|\mathcal{P}|$ & $|\mathcal{P}|$ rem. & \% rem. \\ \hline
    ER\_80\_3.2\_0      & 80  & 124  & 19  & 19  & 9  & 3.92  & 5655   & 2384   & 42.2 \\
    ER\_80\_3.2\_1       & 80  & 121  & 16  & 16  & 10 & 3.83  & 5276   & 2175   & 41.2 \\
    ER\_80\_3.2\_2       & 80  & 124  & 21  & 21  & 10 & 3.92  & 5708   & 2308   & 40.4 \\
    ER\_80\_3.2\_3       & 80  & 115  & 24  & 24  & 9  & 3.64  & 4906   & 1910   & 38.9 \\
    ER\_80\_3.2\_4       & 80  & 115  & 25  & 25  & 11 & 3.64  & 4621   & 1901   & 41.1 \\
    \hline
    ER\_80\_6\_0         & 80  & 189  & 8   & 8   & 6  & 5.98  & 6670   & 5180   & 77.7 \\
    ER\_80\_6\_1         & 80  & 163  & 10  & 10  & 7  & 5.16  & 6327   & 4132   & 65.3 \\
    ER\_80\_6\_2         & 80  & 199  & 8   & 8   & 6  & 6.30  & 6990   & 5717   & 81.8 \\
    ER\_80\_6\_3         & 80  & 191  & 9   & 9   & 5  & 6.04  & 6524   & 5175   & 79.3 \\
    ER\_80\_6\_4         & 80  & 211  & 8   & 8   & 5  & 6.68  & 5848   & 5848   & 100.0 \\
    \hline
    ER\_100\_4\_0        & 100 & 185  & 13  & 13  & 7  & 3.74  & 9221   & 5725   & 62.1 \\
    ER\_100\_4\_1        & 100 & 206  & 11  & 11  & 7  & 4.16  & 9571   & 6500   & 67.9 \\
    ER\_100\_4\_2        & 100 & 208  & -   & 10  & 8  & 4.20  & 10293  & 6844   & 66.5 \\
    ER\_100\_4\_3        & 100 & 200  & -   & 10  & 6  & 4.04  & 9730   & 6005   & 61.7 \\
    ER\_100\_4\_4        & 100 & 218  & -   & 10  & 6  & 4.40  & 10580  & 7580   & 71.6 \\
    \hline
    ER\_100\_8\_0        & 100 & 378  & 7   & 7   & 4  & 7.64  & 13795  & 13102  & 95.0 \\
    ER\_100\_8\_1        & 100 & 402  & 7   & 7   & 4  & 8.12  & 14879  & 14270  & 95.9 \\
    ER\_100\_8\_2        & 100 & 378  & 6   & 6   & 4  & 7.64  & 13503  & 12711  & 94.1 \\
    ER\_100\_8\_3        & 100 & 409  & 6   & 6   & 4  & 8.26  & 15511  & 14896  & 96.0 \\
    ER\_100\_8\_4        & 100 & 408  & 6   & 6   & 4  & 8.24  & 15119  & 14531  & 96.1 \\
    \hline
    WS\_100\_10\_1\_0 & 100 & 500  &  8  & 8   & 8  & 10.10 & 51208  & 47253  & 92.3 \\
    WS\_100\_10\_1\_1 & 100 & 500  &  9  & 9   & 8  & 10.10 & 97045  & 93088  & 95.9 \\
    WS\_100\_10\_1\_2 & 100 & 500  &  -  & 8   & 8  & 10.10 & 92757  & 88095  & 95.0 \\
    WS\_100\_10\_1\_3 & 100 & 500  &  9  & 9   & 8  & 10.10 & 80985  & 77201  & 95.3 \\
    WS\_100\_10\_1\_4 & 100 & 500  &  -  & 7   & 7  & 10.10 & 40077  & 37174  & 92.8 \\
    \hline
    WS\_100\_20\_1\_0 & 100 & 1000 & 4   & 4   & 4  & 20.20 & 45194  & 44518  & 98.5 \\
    WS\_100\_20\_1\_1 & 100 & 1000 & 4   & 4   & 4  & 20.20 & 52548  & 51895  & 98.8 \\
    WS\_100\_20\_1\_2 & 100 & 1000 & 5   & 3   & 5  & 20.20 & 59160  & 58435  & 98.8 \\
    WS\_100\_20\_1\_3 & 100 & 1000 & 5   & 4   & 5  & 20.20 & 57989  & 57291  & 98.8 \\
    WS\_100\_20\_1\_4 & 100 & 1000 & 5   & 4   & 5  & 20.20 & 67365  & 66527  & 98.8 \\
    \hline 
    WS\_150\_30\_1\_0 & 150 & 2250 &  4  & 3   & 4  & 20.13 & 179275 & 178655 & 99.7 \\
    WS\_150\_30\_1\_1 & 150 & 2250 &  4  & 4   & 4  & 20.13 & 197491 & 196563 & 99.5 \\
    WS\_150\_30\_1\_2 & 150 & 2250 &  4  & 4   & 4  & 20.13 & 147746 & 147186 & 99.6 \\
    WS\_150\_30\_1\_3 & 150 & 2250 &  4  & 4   & 4  & 20.13 & 186169 & 185445 & 99.6 \\
    WS\_150\_30\_1\_4 & 150 & 2250 &  4  & 4   & 4  & 20.13 & 154201 & 153663 & 99.7 \\
    \hline
    ieee30                         & 30  & 41   & 10  & 10  & 6  & 9.43  & 484    & 201    & 41.5 \\
    ieee57                         & 57  & 78   & 14  & 14  & 12 & 4.89  & 2147   & 699    & 32.6 \\
    ieee118                        & 118 & 179  & 25  & 25  & 14 & 2.59  & 15600  & 6237   & 40.0 \\
    ieee300                        & 300 & 409  & 107 & 106 & 24 & 0.91  & 125782 & 36954  & 29.4 \\
    \hline
    karate                         & 34  & 78   & 6   & 6   & 5  & 13.90 & 1478   & 1315   & 89.0 \\
    surfers                        & 43  & 336  & 3   & 3   & 3  & 37.21 & 3470   & 3470   & 100.0 \\
    dolphins                       & 62  & 159  & 10  & 10  & 8  & 8.41  & 5513   & 3339   & 60.6 \\
    lesmis                         & 77  & 254  & 21  & 21  & 5  & 8.68  & 6581   & 5381   & 81.8 \\
    adjnoun                        & 112 & 425  & 11  & 11  & 5  & 6.84  & 23535  & 19083  & 81.1 \\
    football                       & 115 & 613  & 5   & 5   & 4  & 9.35  & 22150  & 21114  & 95.3 \\
    enron-emails                   & 143 & 623  & 10  & 10  & 8  & 6.14  & 42979  & 36048  & 83.9 \\
    rado-emails                    & 167 & 3251 & 24  & 24  & 5  & 23.45 & 137009 & 108349 & 79.1 \\
    jazz-musicians                 & 198 & 2742 & -   & 6   & 6  & 14.06 & 186618 & 172595 & 92.5 \\
    ca-netscience                  & 379 & 914  & 89  & 89  & 17 & 1.28  & 189790 & 131535 & 69.3 \\
    \hline
\end{tabular}
\caption{\footnotesize Additional information for the graphs considered in \autoref{tab:times}. For each graph $G$ (from left to right) the order, size, $src(G)$, lower bound $\cliquelb(G)$, $\diam(G)$, edge density of $H(G)$ ($dens(H) := |E(H)|/\binom{|V(H)|}{2}$), number of shortest paths in $G$, number of shortest paths in $G$ remaining after variable elimination, and the percentage of shortest paths in $G$ remaining after variable elimination are indicated. }
\label{tab:stats}
\end{table}

\begin{table}\scriptsize\centering
\makebox[\textwidth][c]{ 
\begin{tabular}{c|cccc....}
\hline
Instance & $n$ & $src(G)$ & Init.~LB & Heur.~UB & \mc{Heur.~Time}  & \mc{Naive} & \mc{Enhanced} & \mc{Bott.~up} \\
\hline
BER\_2\_25\_95\_0    & 27  & 6 & 3 & 9  & 0.028   & \mc{[5,\;6]} & \boldc{37}{286}            & 581.085          \\
BER\_2\_25\_95\_1    & 27  & 5 & 2 & 8  & 0.029   & 77.796       & \boldc{23}{364}            & 40.104           \\
BER\_2\_25\_95\_2    & 27  & 8 & 6 & 10 & 0.027   & \mc{[7,\;8]} & \boldc{2788}{336}          & \mc{[7,\;-]}      \\
BER\_2\_25\_95\_3    & 27  & 7 & 4 & 9  & 0.027   & \mc{[6,\;7]} & \boldc{44}{861}            & \mc{[6,\;-]}     \\
BER\_2\_25\_95\_4    & 27  & 7 & 4 & 10 & 0.027   & \mc{[6,\;7]} & \mc{[6,\;7]}               & \boldc{114}{553} \\
\hline
BER\_2\_25\_90\_0   & 27  & 7 & 4 & 9 & 0.025   & \mc{[6,\;7]}  & 1873.748         & \boldc{61}{777}    \\
BER\_2\_25\_90\_1   & 27  & 7 & 4 & 9 & 0.025   & \mc{[6,\;7]}  & \boldc{304}{780} & 935.404    \\
BER\_2\_25\_90\_2   & 27  & 6 & 4 & 9 & 0.027   & 111.781       & 2781.163         & \boldc{104}{679}    \\
BER\_2\_25\_90\_3   & 27  & 6 & 4 & 9 & 0.026   & \mc{[5,\;6]}  & \boldc{124}{957} & 196.738    \\
BER\_2\_25\_90\_4   & 27  & 7 & 4 & 9 & 0.025   & \mc{[6,\;7]}  & \boldc{208}{203} & 379.667    \\
\hline
BER\_2\_25\_85\_0   & 27  & 8 & 6 & 9  & 0.027   & 787.454      & 1.334                      & \boldc{0}{606}   \\
BER\_2\_25\_85\_1   & 27  & 9 & 8 & 10 & 0.027   & 14.375       & \boldc{0}{770}             & 0.824            \\
BER\_2\_25\_85\_2   & 27  & 8 & 6 & 10 & 0.027   & \mc{[7,\;8]} & 11.767                     & \boldc{6}{340}   \\
BER\_2\_25\_85\_3   & 27  & 8 & 6 & 10 & 0.027   & \mc{[7,\;8]} & \boldc{145}{622}           & 214.679          \\
BER\_2\_25\_85\_4   & 27  & 8 & 7 & 9  & 0.027   & 23.749       & 1.022                      & \boldc{0}{695}  \\
\hline
BER\_2\_25\_80\_0   & 27  & 9 & 8 & 12 & 0.024   & 25.743      & 0.764         & \boldc{0}{646}    \\
BER\_2\_25\_80\_1   & 27  & 9 & 8 & 13 & 0.025   & 422.235     & 0.817         & \boldc{0}{734}    \\
BER\_2\_25\_80\_2   & 27  & 9 & 9 & 11 & 0.039   & 15.630      & 0.210         & \boldc{0}{160}    \\
BER\_2\_25\_80\_3   & 27  & 8 & 6 & 10 & 0.026   & 2383.839    & 20.677        & \boldc{7}{788}    \\
BER\_2\_25\_80\_4   & 27  & 8 & 7 & 11 & 0.025   & 25.990      & 0.936         & \boldc{0}{599}    \\
\hline
\end{tabular}
} 
\caption{\footnotesize Performance comparison of proposed methods. The lower bound $\max(\diam(G), \cliquelb(G))$ is indicated as Init. LB and the upper bound obtained by \autoref{alg:1} is indicated as Heur. UB. A 1 hour time limit was used. If a method timed out before a solution was found, the best lower and upper bounds obtained within the timeout period (if any) are given in brackets: $[\mathrm{lb},\mathrm{ub}]$. The bottom-up approach does not produce an upper bound, and thus ub $=$ ``-'' for these tests. The fastest method for each instance is indicated in bold. }
\label{tab:smh_times}
\end{table}

\begin{table}\scriptsize\centering
\begin{tabular}{c|rrccccrrr}
    \hline
    Instance & $n$ & $m$ & $src(G)$ & $\cliquelb(G)$ & $\diam(G)$ & $dens(H)$ & $|\mathcal{P}|$ & $|\mathcal{P}|$ rem. & \% rem. \\ \hline
    BER\_2\_25\_95\_0       & 27  & 48   & 6    & 3  & 3  & 13.68 & 622    & 599    & 96.30 \\
    BER\_2\_25\_95\_1       & 27  & 50   & 5    & 2  & 2  & 14.25 & 625    & 625    & 100.00 \\
    BER\_2\_25\_95\_2       & 27  & 44   & 8    & 6  & 4  & 12.54 & 766    & 633    & 82.64 \\
    BER\_2\_25\_95\_3       & 27  & 46   & 7    & 4  & 4  & 13.11 & 695    & 590    & 84.89 \\
    BER\_2\_25\_95\_4       & 27  & 47   & 7    & 4  & 3  & 13.39 & 619    & 597    & 96.45 \\
    \hline
    BER\_2\_25\_90\_0       & 27  & 47  &  7   & 4  & 3  & 13.39 & 619    & 597    & 96.45 \\
    BER\_2\_25\_90\_1       & 27  & 47  &  7   & 4  & 3  & 13.39 & 661    & 573    & 86.69 \\
    BER\_2\_25\_90\_2       & 27  & 48  &  6   & 4  & 2  & 13.68 & 644    & 575    & 89.29 \\
    BER\_2\_25\_90\_3       & 27  & 48  &  6   & 4  & 2  & 13.68 & 644    & 575    & 89.29 \\
    BER\_2\_25\_90\_4       & 27  & 47  &  7   & 4  & 3  & 13.39 & 661    & 573    & 86.69 \\
    \hline
    BER\_2\_25\_85\_0       & 27  & 44   & 8    & 6  & 4  & 12.54 & 694    & 561    & 80.84 \\
    BER\_2\_25\_85\_1       & 27  & 42   & 9    & 8  & 4  & 11.97 & 781    & 628    & 80.41 \\
    BER\_2\_25\_85\_2       & 27  & 44   & 8    & 6  & 4  & 12.54 & 748    & 615    & 82.22 \\
    BER\_2\_25\_85\_3       & 27  & 44   & 8    & 6  & 4  & 12.54 & 766    & 633    & 82.64 \\
    BER\_2\_25\_85\_4       & 27  & 43   & 8    & 7  & 4  & 12.25 & 767    & 623    & 81.23 \\
    \hline
    BER\_2\_25\_80\_0       & 27  & 42   & 9    & 8  & 4  & 11.97 & 781    & 628    & 80.41 \\
    BER\_2\_25\_80\_1       & 27  & 42   & 9    & 8  & 4  & 11.97 & 781    & 628    & 80.41 \\
    BER\_2\_25\_80\_2       & 27  & 41   & 9    & 9  & 4  & 11.68 & 850    & 690    & 81.18 \\
    BER\_2\_25\_80\_3       & 27  & 44   & 8    & 6  & 4  & 12.54 & 748    & 615    & 82.22 \\
    BER\_2\_25\_80\_4       & 27  & 44   & 8    & 7  & 4  & 12.25 & 767    & 623    & 81.23 \\
    \hline
\end{tabular}
\caption{\footnotesize Additional information for the graphs considered in \autoref{tab:smh_times}. For each graph $G$ (from left to right) the order, size, $src(G)$, lower bound $\cliquelb(G)$, $\diam(G)$, edge density of $H(G)$ ($dens(H) := |E(H)|/\binom{|V(H)|}{2}$), number of shortest paths in $G$, number of shortest paths in $G$ remaining after variable elimination, and the percentage of shortest paths in $G$ remaining after variable elimination are indicated. }
\label{tab:smh_stats}
\end{table}

A comparison of all three computational methods for the social/infrastructure graphs and the sparse random graphs is given in \autoref{tab:times}, and additional details for these graphs are provided in \autoref{tab:stats}. The same information for the dense bipartite random graphs is given in \autoref{tab:smh_times} and \autoref{tab:smh_stats}, respectively. 
For all instances, the reported runtimes for the na\"ive model and the enhanced model both include the time to compute the heuristic solution; the bottom-up approach does not require a heuristic solution, and thus the bottom-up times do not include the heuristic runtime.

When the na\"ive and enhanced models time out, we report the best lower and upper bounds obtained by Gurobi within the timeout period (1 hour) in brackets: $[\mathrm{lb},\mathrm{ub}]$. In three instances (ieee300, enron-emails and ca-netscience), we fail to solve the root-node LP relaxation of the na\"ive model within the timeout period, and thus we report a best lower bound of ``-''. When the bottom-up approach times out, we report the best lower bound, i.e., the largest value of $K_0$ attained within the timeout period. The bottom-up method does not benefit from the use of a heuristic, and does not otherwise produce an upper bound for $src(G)$. Consequently, we report the best upper bound for the bottom-up approach as ``-''. For two instances (ieee300 and football), we were able to compute $src(G)$ using an extended time limit and additional ad hoc analysis---these values are reported for completeness in \autoref{tab:times}, and are given in parentheses.

\paragraph{Social/Infrastructure and Sparse Random Graphs} We first consider social/infrastructure graphs and sparse random graphs (\autoref{tab:times}).
In all but one of these instances (excepting those for which all three methods timed out), the bottom-up approach is the fastest method, often by a wide margin. 
This is likely due to the combination of strong, and often tight, lower bounds given by $\cliquelb(G)$ and a robust heuristic solution search provided by Gurobi. 
Because the initial lower bound $\max(\diam(G)$, $\cliquelb(G))$ is often equal to $src(G)$, in most cases optimality can be verified as soon as a single feasible solution is found. There are a few cases however, in which the bottom-up approach proves the bound $\max(\diam(G), \cliquelb(G))$ to be slack. 
Because proving infeasiblity is often computationally expensive, we expected the bottom-up approach to be outperformed by the enhanced model on these instances. However, none of the proposed methods are able to solve these instances within the hour timeout period, and the solution of the enhanced model often still terminates with a fairly large optimality gap.

In 21 of the 49 instances in \autoref{tab:times}, both the na\"ive model and the enhanced model timed out. 
In 25 of the remaining 28 instances, 
the enhanced model was faster than the na\"ive model, often by large margins. 
This result indicates the effectiveness of the variable and constraint elimination strategy introduced in \autoref{subsec:ip-imp} for sparse graphs.
Indeed, as shown in \autoref{tab:stats}, the variable elimination strategy is able to reduce the number of shortest paths in the formulation by as much as 70\%. In general, the more sparse the auxiliary graph $H$ is, the larger the number of paths that our method is able to eliminate.


One surprising observation is that despite its large number of shortest paths, and thus model sizes, the $n=150$  Watts-Strogatz instances can often be solved quite quickly. In comparison, the $n=100,  p=0.04$ Erd\H{o}s-R\'{e}nyi instances are much more difficult to compute despite having approximately 20 times fewer shortest paths. This result indicates that instance difficulty may be more closely tied to graph features other than $|\mathcal{P}|$, and partially justifies the exponential formulation of \refp{ip:master}. We also note that the edge density of $H$ is often quite low, and thus the max clique subproblems used to compute $\cliquelb(G)$
are small and particularly easy to solve in practice. 

\paragraph{Dense Random Bipartite Graphs}
Results and statistics for the dense random bipartite graphs with $n=25$ are shown in \autoref{tab:smh_times} and \autoref{tab:smh_stats}, respectively. 
In contrast with the much larger instances in \autoref{tab:times}, we see that these instances are very challenging for their size. This is particularly pronounced for the most dense instances (first 5 rows of \autoref{tab:smh_times}).
As suggested by \autoref{prop:arb-far}, these graphs have relatively large gaps between $src(G)$ and $\max(\diam(G),\cliquelb(G))$---in these instances the bound is off by as much as 50\%, much larger than the sparse instances of \autoref{tab:times}.

Because of the comparatively poor quality of the bound $\max(\diam(G),\cliquelb(G))$ for these instances, we see that the bottom-up method is often bested by the enhanced model. This degradation of performance in the bottom-up method is likely due to the larger number of iterations needed to reach a value of $K_0$ which produces a feasible solution. This trend fades as the graphs becomes slightly less dense; even at 80\% edge density (last 5 rows of \autoref{tab:smh_times}) the bottom-up method uniformly outperforms the enhanced model.
%
We note that within the context of the applications of the strong rainbow connection problem, the dense bipartite instances are somewhat artificial. They are not representative of most social/infrastructure networks which arise in practice, or even of real-world affiliation networks, as they have a part consisting of two vertices yet are highly dense. However, they do provide valuable insights into the limits of our methods when a strong lower bound on $src(G)$ is not available---particularly for the bottom-up approach. 

\section{Conclusions and Future Work}\label{sec:conclusion}
In this paper we propose the first exact computational methods for strong rainbow connection in graphs as well as a novel lower bound on the strong rainbow connection numbers of general graphs. We demonstrate that the bound we provide, especially when combined with the graph diameter lower bound, is often equal to the strong rainbow connection number in sparse graphs. We illustrate this phenomenon and compare the effectiveness of our proposed method in computational experiments which consider several random graph models and real world social and infrastructure networks. 
The method we propose is an integer program, for which we provide several computational enhancements including a random heuristic and variable/constraint elimination strategies.

Several branches of future work remain. First, it would be desirable to find an integer program formulation for computing $src(G)$ (or its variants) that contains fewer variables and/or constraints than the formulation \refp{ip:master}. Additionally, as observed in \autoref{sec:computation},
the lower bound $\max(\diam(G), \cliquelb(G))$ often exactly equals $src(G)$, in both instances generated by popular random graph models and real world networks commonly seen in the literature. As of yet, the only graphs which we have found with large $src(G) - \max ( \diam(G), \cliquelb(G) )$ are a set of complete bipartite graphs. It may be interesting to identify a set of graphs arising from other applications or random graph models in which $src(G) - \max ( \diam(G), \cliquelb(G) )$ is also large. In such graphs it is likely that the performance of the proposed bottom-up approach would be significantly degraded due to the lack of a strong initial lower bound.

\paragraph{Acknowledgments}
This work was supported by National Science Foundation grant number DMS-1720225. Additionally, D.~Mildebrath was supported by the United States Department of Defense through the National Defense Science and Engineering Graduate Fellowship program. The authors would like to thank Zachary Kingston of Rice University for helpful comments and conversations.

\renewcommand\refname{REFERENCES}
\bibliography{main}

\appendix

\section*{Appendix: Valid Inequalities and Relation to Set Packing}

In this appendix, we introduce a class of valid inequalities for the integer program \refp{ip:master}, based on the auxiliary graph $H$ constructed in \autoref{sec:lb}. In practice, these inequalities produce substantial computational benefits on only a small number of instances considered in the body of the paper. We include them here for completeness. We also prove a connection between the strong rainbow connection problem and set packing.


For any $U\subseteq{V(H)}=E(G)$ that is a clique in $H$ and any color $k\in{K}$, it follows directly from \autoref{cor:edges-different} that
\begin{equation}\label{eqn:clique-cut}
\sum_{e\in{U}}x_{ek}\leq1
\end{equation}
is valid for \refp{ip:master}. 
In this section, we prove that the inequalities \refp{eqn:clique-cut} are facet-defining for a relaxation of the feasible region of \refp{ip:master}. Specifically, we replace the equality \refp{ip:master-sum-one} with the inequality $\sum_{k\in{K}}x_{ek}\leq1$, and we eliminate the linking constraint \refp{ip:master-vub} (thereby uncoupling the $x$ and $y$ variables from the $z$ variables), to obtain the following set:
\[
    X:=\left\{
    \begin{array}{l}
    x_{ek}\in\{0,1\}\;\forall\;e\in{E(G)},\;\forall\;k\in{K}\\
    {}\\
    y_P\in\{0,1\}\;\forall\;P\in\mathcal{P}
    \end{array}
    \left|
    \begin{array}{l}
    \displaystyle\sum_{k\in{K}}x_{ek}\leq1\;\forall\;e\in{E(G)}\\
    \displaystyle\sum_{e\in{E(P)}}x_{ek}+(|P|-1)y_P\leq|P|\;\forall\;P\in\mathcal{P},\;\forall\;k\in{K}\\
    \displaystyle\sum_{P\in\mathcal{P}_{uv}}y_P\geq1\;\forall\;u\neq{v}\in{V(G)}
    \end{array}
    \right.
    \right\}.
\]
Our main result is the following:
\begin{theorem}\label{prop:facets}
For any $U\subseteq{V(H)=E(G)}$ that is a clique in $H$ and any color $k\in{K}$, the inequality \refp{eqn:clique-cut} is facet-defining for $\conv(X)$.
\end{theorem}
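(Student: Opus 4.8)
\emph{Overall plan.} The plan is the standard two-step route for a facet proof: first pin down $\dim(\conv(X))$, then show that the face $F:=\{(x,y)\in\conv(X):\sum_{e\in U}x_{ek}=1\}$ has dimension exactly $\dim(\conv(X))-1$. For the second step I would use the usual criterion (see Nemhauser and Wolsey~\cite{nemhauser1988}): it suffices to prove that every linear equation $\sum_{e',k'}a_{e'k'}x_{e'k'}+\sum_P b_Py_P=c$ holding throughout $F$ is, modulo the equations cutting out $\mathrm{aff}(\conv(X))$, a scalar multiple of the clique equation $\sum_{e\in U}x_{ek}=1$.

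\emph{Dimension of $\conv(X)$.} Let $\mathcal Q\subseteq\mathcal P$ be the set of shortest paths that are the unique shortest path between their endpoints (equivalently, those $P$ all of whose edges separate $P$'s endpoints). For $P\in\mathcal Q$ the constraint $\sum_{P'\in\mathcal P_{uv}}y_{P'}\ge1$ reads $y_P\ge1$, so $y_P=1$ on all of $X$. I claim $\mathrm{aff}(\conv(X))=\{(x,y):y_P=1\ \forall P\in\mathcal Q\}$, hence $\dim(\conv(X))=m|K|+|\mathcal P|-|\mathcal Q|$. One inclusion is the remark just made; for the other, start from the point with every $x$-coordinate $0$ and every $y$-coordinate $1$ (feasible) and perturb a single coordinate at a time: set $x_{ek'}=1$ for each $(e,k')$, and set $y_P=0$ for each $P\notin\mathcal Q$. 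Each perturbed point is feasible — a routine check of the three constraint families, using $|\mathcal P_{uv}|\ge2$ for the $y_P=0$ perturbations — and the displacement vectors are distinct coordinate unit vectors, hence linearly independent.

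\emph{Reading off the coefficients.} $F$ is a proper nonempty face. Given an equation valid on $F$, subtract off $\sum_{P\in\mathcal Q}b_P(y_P-1)$ so that we may assume $b_P=0$ for all $P\in\mathcal Q$; then compare pairs of points of $F$ whose displacement is a single coordinate direction. (i) For $P\notin\mathcal Q$, take a point of $F$ and flip its $y_P$ to $0$ (legal: the path constraint for $P$ only loosens and $|\mathcal P_{uv}|\ge2$), giving $b_P=0$. (ii) For any edge $e'$ and any colour $k'\neq k$, take a point of $F$ in which exactly one edge $e_0\in U$ (with $e_0\neq e'$) carries colour $k$ and switch $x_{e'k'}$ on; this is feasible because each shortest path then still carries at most one edge of each of the colours $k,k'$, and it stays in $F$ since $e'$ is not $k$-coloured — so $a_{e'k'}=0$. (iii) Comparing the point of $F$ with $x_{e_0k}=1$ to the one with $x_{e_1k}=1$ ($e_0,e_1\in U$) gives $a_{e_0k}=a_{e_1k}=:\lambda$. (iv) Evaluating at the point with $x_{e_0k}=1$, all other $x=0$, $y\equiv1$ gives $c=\lambda$.

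\emph{The main obstacle: $a_{e'k}=0$ for $e'\in E(G)\setminus U$.} Here one cannot simply switch on a second colour-$k$ edge, because of the coupling constraints $\sum_{e\in E(P)}x_{ek}+(|P|-1)y_P\le|P|$; this is the only place the clique hypothesis on $U$ really enters. The remedy is to choose $e_1\in U$ \emph{non-adjacent to $e'$ in $H$} (such an $e_1$ exists because $U\cup\{e'\}$ is not a clique in $H$), and to observe that then no shortest path whose edge set contains both $e_1$ and $e'$ is unique — otherwise $e_1$ and $e'$ would both separate that path's endpoints and hence be $H$-adjacent. Now start from the point of $F$ with $x_{e_1k}=1$, all other $x=0$, $y\equiv1$, additionally set $x_{e'k}=1$, and set $y_P=0$ for exactly those $P$ containing both $e_1$ and $e'$. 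This point is feasible: every such $P$ now has $y_P=0$ (and $|P|\ge2$), every other shortest path carries at most one colour-$k$ edge, and for every pair $u,v$ the non-adjacency of $e_1,e'$ guarantees some shortest $(u,v)$-path avoids one of $e_1,e'$ and so keeps $y=1$, so constraint \refp{ip:master-sum-y} survives; and the point lies in $F$ since $e'\notin U$. As $b_P=0$ for every $P$ we zeroed, comparing this point with the starting one yields $a_{e'k}=0$. Combining (i)--(iv) with this, $(a,b,c)$ equals $\lambda$ times the clique equation modulo $\mathrm{aff}(\conv(X))$, so \refp{eqn:clique-cut} is facet-defining. The delicate step is precisely this last construction — producing a feasible point of $F$ that alters the colour-$k$ pattern off $U$ without breaking any path constraint — and it is where I expect the bulk of the care to go.
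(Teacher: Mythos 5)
Your route is genuinely different from the paper's. The paper argues directly: it first asserts that $\conv(X)$ is full-dimensional, and then exhibits $m|K|+|\mathcal{P}|$ affinely independent points of $X$ satisfying \refp{eqn:clique-cut} with equality, building the $x$-parts from a facet of the set-packing polytope $S(\mathcal{F})$ of a path fixing $\mathcal{F}$ (via \autoref{prop:clique-graph} and \autoref{lem:setpacking}) and completing the $y$-parts by hand. You instead determine the affine hull and use the indirect method, showing every equation valid on the face lies in the span of the clique equation and the equality system. Your dimension claim contradicts the paper's full-dimensionality proposition, but your accounting is the defensible one: whenever $|\mathcal{P}_{uv}|=1$ (in particular for every adjacent pair $u,v$, whose one-edge path must belong to $\mathcal{P}$ for the constraint $\sum_{P\in\mathcal{P}_{uv}}y_P\ge1$ to be satisfiable), that constraint forces $y_P=1$ throughout $X$, so $\conv(X)$ lies in the hyperplanes $y_P=1$, $P\in\mathcal{Q}$, and the points with a single such $y_P$ set to $0$ are infeasible. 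Your affine-hull computation, steps (i), (iii), (iv), and the feasibility checks in your main construction are all correct.

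The genuine gap is in the two places where you vary coordinates $x_{e'k'}$ with $e'$ ranging over all of $E(G)$. In the main step you need $e_1\in U$ non-adjacent to $e'$ in $H$, and you justify its existence by asserting that $U\cup\{e'\}$ is not a clique --- but that is precisely the statement that $U$ is a \emph{maximal} clique of $H$, which is not a hypothesis of \autoref{prop:facets}. Similarly, step (ii) needs some $e_0\in U$ with $e_0\neq e'$, which fails when $U$ is a singleton and $e'\in U$. Neither defect is repairable by a cleverer construction, because the statement itself fails in those cases: if $U\cup\{g\}$ is a clique for some $g\in E(G)\setminus U$, then at any integer point of the face with $x_{gk}=1$, the unique $e_0\in U$ carrying color $k$ is $H$-adjacent to $g$, so some pair $u,v$ is separated by both; every $P\in\mathcal{P}_{uv}$ then contains two color-$k$ edges, forcing $y_P=0$ by the path constraints and violating $\sum_{P\in\mathcal{P}_{uv}}y_P\ge1$. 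Hence $x_{gk}=0$ holds on the entire face, an additional independent equation, and the face is not a facet; a singleton $U=\{e'\}$ likewise forces $x_{e'k''}=0$ for all $k''\neq k$, so it gives no facet when $|K|\ge2$. So what you have actually proved is the theorem under the extra hypotheses that $U$ is a maximal clique of $H$ with $|U|\ge2$ (or $|K|=1$); as a proof of the statement for arbitrary cliques it has a gap --- one that mirrors the paper's own proof, which invokes the set-packing clique-facet lemma without the maximality requirement and relies on the full-dimensionality claim discussed above.
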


To prove \autoref{prop:facets}, we first derive a set-packing polytope closely related to the feasible region of \refp{ip:master}, and prove that the inequalities \refp{eqn:clique-cut} are facet-defining for the associated set-packing polytope.
We now provide a very brief summary of the set-packing polytope and some of its properties---we direct the reader to \cite{setpacking} for a comprehensive review.

Given a matrix $A\in\{0,1\}^{m\times{n}}$ without a zero row, the set-packing polytope is the set $\conv\{x\in\{0,1\}^n : Ax\leq\mathbf{1}\}$, where $\mathbf{1}\in\mathbb{R}^m$ is the vector of all $1$s. For any such 0-1 matrix $A$, define the associated graph $G(A)$ with vertex set $V(G(A))=\{v_1,\dots,v_n\}$ corresponding to the columns of $A$. Two vertices $v_i, v_j \in{V(G(A))}$ are adjacent in $G(A)$ if and only if the corresponding columns $a_i$ and $a_j$ of $A$ are \textit{not} orthogonal (i.e.~$a_i\tr{a_j^{}}\geq1$). We will make use of the following result from \cite{fulkerson1971,padberg1973}:

\begin{lemma}\label{lem:setpacking}
The inequality $\sum_{\{j:v_j\in{L}\}}x_j\leq1$ is facet-defining for the set-packing polytope with matrix $A$ if and only if $L\subseteq{V(G(A))}$ is a clique in the graph $G(A)$.
\end{lemma}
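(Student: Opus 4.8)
The plan is to recognize Lemma~\ref{lem:setpacking} as the classical clique-facet characterization for set-packing polyhedra and to prove it by the standard device of exhibiting a family of $n$ affinely independent feasible points lying on the face cut out by the clique inequality. The first step is to reinterpret the polytope combinatorially. Writing $P := \conv\{x \in \{0,1\}^n : Ax \leq \mathbf{1}\}$, I would observe that a $0$-$1$ vector $x$ satisfies $Ax \leq \mathbf{1}$ if and only if its support is a stable set of $G(A)$: two indices $i,j$ occur together in some row of $A$ precisely when the columns $a_i, a_j$ are non-orthogonal, i.e.\ when $v_i v_j \in E(G(A))$. Hence $P$ is the stable set polytope of $G(A)$, whose integer points are exactly the incidence vectors of stable sets. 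Because $A$ has no zero row, every column satisfies $a_i \leq \mathbf{1}$, so each singleton is a stable set; consequently the $n+1$ points $0, e_1, \dots, e_n$ are feasible and affinely independent, giving $\dim P = n$. This is the setting in which facets are the faces of dimension $n-1$.

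For the necessity direction (facet-defining $\Rightarrow$ clique) I would argue by contraposition, using only validity. If $L$ is not a clique it contains two non-adjacent vertices $v_i, v_j$, so $\{v_i, v_j\}$ is a stable set and $e_i + e_j \in P$ violates $\sum_{v_\ell \in L} x_\ell \leq 1$. An inequality that is not even valid for $P$ cannot define a face of $P$, let alone a facet, so any facet-defining inequality of this form forces $L$ to be a clique.

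The substantive direction is sufficiency. Let $F := \{x \in P : \sum_{v_j \in L} x_j = 1\}$. Since $0 \notin F$, the face $F$ is proper, and because $P$ is full-dimensional it suffices to produce $n$ affinely independent points of $F$. I would take $e_i$ for each $v_i \in L$, which is the incidence vector of the singleton $\{v_i\}$ and attains the bound, together with, for each $v_i \notin L$, a point $e_i + e_{\ell(i)}$ in which $v_{\ell(i)} \in L$ is non-adjacent to $v_i$; then $\{v_i, v_{\ell(i)}\}$ is a stable set, its incidence vector lies in $P$, and exactly one of its two indices lies in $L$, so the point lies in $F$. To turn these $n$ points into a facet certificate I would suppose an equation $c^{\mathrm{T}} x = d$ holds on all of them: evaluating at $e_i$ for $v_i \in L$ gives $c_i = d$, and evaluating at $e_i + e_{\ell(i)}$ for $v_i \notin L$ then gives $c_i = d - c_{\ell(i)} = 0$. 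Thus $(c,d)$ is a scalar multiple of the pair consisting of the incidence vector of $L$ and $1$, the affine hull of the $n$ points is a single hyperplane, and $\dim F = n-1$.

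The main obstacle, and the one place requiring care, is the selection of the vertices $v_{\ell(i)}$ for $v_i \notin L$: the construction needs every such $v_i$ to be non-adjacent to \emph{some} vertex of $L$, which is exactly the assertion that $L$ is a \emph{maximal} clique (the classical Fulkerson--Padberg hypothesis). The sufficiency direction therefore genuinely rests on maximality, and for a clique that is not maximal the missing points $e_i + e_{\ell(i)}$ cannot be formed and $F$ has dimension strictly below $n-1$. Accordingly, I would carry out the construction above for a maximal clique and, when applying the lemma, first extend the clique $L$ to a maximal one before invoking the facet conclusion.
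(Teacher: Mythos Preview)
The paper does not prove this lemma at all: it is quoted without proof as a classical result of Fulkerson (1971) and Padberg (1973). So there is no paper-side argument to compare against, and your approach is precisely the standard Padberg construction.

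More to the point, you have correctly spotted that the lemma as \emph{stated} in the paper is not literally true. The classical theorem asserts that the clique inequality is facet-defining if and only if $L$ is a \emph{maximal} clique of $G(A)$. Your argument for the sufficiency direction makes this explicit: for each $v_i\notin L$ you need some $v_{\ell(i)}\in L$ non-adjacent to $v_i$, and that is exactly maximality of $L$. If $L$ is a non-maximal clique and $v$ is adjacent to every vertex of $L$, then every point of the face $F$ has $x_v=0$, so $F$ is contained in two distinct supporting hyperplanes and $\dim F\le n-2$; the inequality is valid but not facet-defining. Your proposed remedy---extend $L$ to a maximal clique before invoking the facet conclusion---is the correct way to use the result, and is how Padberg states it. The paper's subsequent application of the lemma (Proposition~\ref{prop:spp-facets}) only shows that $U^k$ is a clique in $G(B_{\mathcal F})$, not that it is maximal, so the same caveat applies there.
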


In the sequel, we will require the concept of a path fixing. 
A set $\mathcal{F}\subseteq\mathcal{P}$ is a \textit{path fixing of $G$} if $|\mathcal{F}\cap\mathcal{P}_{uv}|=1$ for all $u\neq{v}\in{V(G)}$. Let $\mathscr{F}$ denote the set of all path fixings of $G$.

Given an instance of \refp{ip:master}, and a path fixing $\mathcal{F}\in\mathscr{F}$, we define the associated set-packing polytope $S(\mathcal{F}):=\conv(X(\mathcal{F}))$ where
\[
    X(\mathcal{F}):=\left\{x_{ek}\in\{0,1\}\;\forall\;e\in{E(G)},\;\forall\;k\in{K}\left|
    \begin{array}{l}
    \displaystyle\sum_{k\in{K}}x_{ek}\leq1\;\forall\;e\in{E(G)}\\
    \displaystyle\sum_{e\in{E(P_{uv})}}x_{ek}\leq1\;\forall\;P_{uv}\in\mathcal{F},\;\forall\;k\in{K}
    \end{array}
    \right.
    \right\}.
\]
By stacking the matrix variables $(x_{ek})_{e\in{E(G)},k\in{K}}$ columnwise, we can express the constraints of the set-packing polytope $S(\mathcal{F})$ in matrix form as $B_\mathcal{F}x\leq\mathbf{1}$ where
\[
    B_\mathcal{F}:=\begin{bmatrix}
    I_m & I_m & \dots & I_m \\
    A_\mathcal{F} &     &       &     \\
        & A_\mathcal{F} &       &     \\
        &     & \ddots&     \\
        &     &       & A_\mathcal{F} \\
    \end{bmatrix}.
\]
Here, $I_m$ is the $m\times{m}$ identity matrix, and $A_\mathcal{F}$ is the matrix whose rows correspond to the elements of $\mathcal{F}$ (i.e.~paths $P_{uv}$ for $u\neq{v}\in{V(G)}$), and whose columns are edges of $G$. The entry of $A_\mathcal{F}$ corresponding to path $P$ and edge $e$ equals 1 if and only if edge $e$ is contained in path $P$. We emphasize that each column of $B_\mathcal{F}$ is indexed by a pair $(e,k)\in{E(G)\times{K}}$. The graph $G(A_\mathcal{F})$ is closely related to the auxiliary graph $H$ introduced in \autoref{sec:lb}, as detailed by \autoref{prop:clique-graph}.

\begin{proposition}\label{prop:clique-graph}
$H = \bigcap_{\mathcal{F}\in\mathscr{F}}G(A_\mathcal{F})$.
\end{proposition}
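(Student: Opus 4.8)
The plan is to prove the graph identity by a direct double inclusion of edge sets, after first noting that all the graphs involved share a common vertex set. Each $G(A_\mathcal{F})$ has vertex set equal to the set of columns of $A_\mathcal{F}$, which is $E(G)$, and this is exactly $V(H)$; hence, by the definition of graph intersection, the vertex set of $\bigcap_{\mathcal{F}\in\mathscr{F}}G(A_\mathcal{F})$ is $E(G)=V(H)$, and only the edge sets remain to be compared. Throughout I will use the characterization that $e_1 e_2$ is an edge of $G(A_\mathcal{F})$ precisely when the columns of $A_\mathcal{F}$ indexed by $e_1$ and $e_2$ are not orthogonal, i.e.\ precisely when some path $P\in\mathcal{F}$ has $e_1,e_2\in E(P)$.

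For the inclusion $E(H)\subseteq\bigcap_{\mathcal{F}}E(G(A_\mathcal{F}))$, suppose $e_1 e_2\in E(H)$, so there is a pair $v_1\neq v_2\in V(G)$ separated by both $e_1$ and $e_2$; that is, $e_1,e_2\in E(P)$ for every $P\in\mathcal{P}_{v_1 v_2}$. Fix an arbitrary path fixing $\mathcal{F}\in\mathscr{F}$. By definition $\mathcal{F}$ contains a unique element $P^\ast\in\mathcal{P}_{v_1 v_2}$, and since $P^\ast\in\mathcal{P}_{v_1 v_2}$ we get $e_1,e_2\in E(P^\ast)$, whence $e_1 e_2\in E(G(A_\mathcal{F}))$. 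As $\mathcal{F}$ was arbitrary, $e_1 e_2$ lies in the intersection.

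For the reverse inclusion I would argue by contraposition: assuming $e_1 e_2\notin E(H)$, I will exhibit a single path fixing $\mathcal{F}$ witnessing $e_1 e_2\notin E(G(A_\mathcal{F}))$. Since $e_1 e_2$ is not an edge of $H$, for every pair $v_1\neq v_2\in V(G)$ there is at least one shortest path $P_{v_1 v_2}\in\mathcal{P}_{v_1 v_2}$ that fails to contain both $e_1$ and $e_2$ (otherwise $v_1,v_2$ would be separated by both). Choosing one such $P_{v_1 v_2}$ for each pair assembles a set $\mathcal{F}=\{P_{v_1 v_2}:v_1\neq v_2\in V(G)\}$, which is a valid path fixing because it meets each $\mathcal{P}_{v_1 v_2}$ in exactly one element. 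No path in this $\mathcal{F}$ contains both $e_1$ and $e_2$, so $e_1 e_2\notin E(G(A_\mathcal{F}))$, and therefore $e_1 e_2\notin\bigcap_{\mathcal{F}}E(G(A_\mathcal{F}))$. Combining the two inclusions yields $E(H)=\bigcap_{\mathcal{F}}E(G(A_\mathcal{F}))$, and hence the asserted graph identity.

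There is no serious obstacle here; the only point requiring a small amount of care is that in the reverse direction the per-pair choices are made independently and must glue into one path fixing, which is immediate because distinct pairs $\{v_1,v_2\}$ index disjoint sets $\mathcal{P}_{v_1 v_2}$. It is also worth remarking in passing that every edge $e=jk$ of $G$ belongs to $\mathcal{P}_{jk}$ via the single-edge shortest path, so no column of $A_\mathcal{F}$ is zero and the set-packing framework applies cleanly, although this observation is not needed for the identity itself.
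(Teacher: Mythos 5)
Your proof is correct and follows essentially the same route as the paper's: both establish the vertex-set identification and then prove the edge-set equality by double inclusion, using an arbitrary fixing and the separation property for the forward direction, and constructing a witness path fixing (one shortest path per pair avoiding at least one of the two edges) via the contrapositive for the reverse direction.
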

\begin{proof}
Since $V(H) = E(G)$, let the vertices of $H$ be labeled $e_1, \dots, e_m$. We assume without loss of generality that for any $\mathcal{F} \in \mathscr{F}$ the columns of $A_{\mathcal{F}}$ are ordered so that the column $a_i$ corresponds to edge $e_i$ in $E(G)$ and that the vertices of $G(A_{\mathcal{F}})$ are also labeled $e_1, \dots e_m$ so that $e_i$ corresponds to column $a_i$ for any in $e_i \in E(G(A_{\mathcal{F}}))$. Thus, $H$ and $G(A_{\mathcal{F}})$ have the same set of vertex labels for any $\mathcal{F} \in \mathscr{F}$ and so $V(H) = V(\bigcap_{\mathcal{F}\in\mathscr{F}} G(A_\mathcal{F}))$. To show that in fact $H = \bigcap_{\mathcal{F}\in\mathscr{F}} G(A_\mathcal{F})$, we next show that $E(H) = E(\bigcap_{\mathcal{F}\in\mathscr{F}} G(A_\mathcal{F}))$ by inclusions. 

To first prove $E(H) \subseteq E(\bigcap_{\mathcal{F}\in\mathscr{F}} G(A_\mathcal{F}))$, let $e_i e_j$ be an edge in $E(H)$. By construction then, there must exist a pair of vertices $u \neq v \in V(G)$ such that $e_i,e_j \in E(G)$ both separate $u$ and $v$ in $G$. Fix some $\mathcal{F} \in \mathscr{F}$. Let $k$ be the row of $A_{\mathcal{F}}$ corresponding to the $(u,v)$-path $P_{uv}$ which is fixed in $\mathcal{F}$. Since $e_i$ and $e_j$ each separate $u,v$ in $G$, both $e_i$ and $e_j$ must be in $E(P_{uv})$ and so elements $a_{k,i} = a_{k,j} = 1$ in $A_\mathcal{F}$. Since $a_i^T a_j \geq a_{k,i} a_{k,j} = 1$, we see that $e_i$ and $e_j$ are also adjacent in $G(A_\mathcal{F})$. Additionally, since $\mathcal{F}$ was chosen arbitrarily in $\mathscr{F}$, $e_i$ and $e_j$ must be adjacent in $G(A_\mathcal{F})$ for every $\mathcal{F}$ in $\mathscr{F}$. Thus, $e_i e_j$ is in $E(\bigcap_{\mathcal{F}\in\mathscr{F}} G(A_\mathcal{F}))$ so $E(H) \subseteq E(\bigcap_{\mathcal{F}\in\mathscr{F}} G(A_\mathcal{F}))$.

Next we show $E(H) \supseteq E(\bigcap_{\mathcal{F}\in\mathscr{F}} G(A_\mathcal{F}))$ by its contrapositive statement; that is, for any $e_i \neq e_j \in E(G)$ such that $e_i e_j \not \in E(H)$, it must be that $e_i e_j \not \in E(\bigcap_{\mathcal{F}\in\mathscr{F}} G(A_\mathcal{F}))$. Let $e_i \neq e_j \in E(G)$ such that $e_i e_j \not \in E(H)$. Since there does not exist a pair of vertices in $G$ which are separated by both $e_i$ and $e_j$ in $E(G)$, for each pair of vertices $u,v$ there exists a path $P_{uv}$ such that at most one of $e_i, e_j$ are contained in $E(P_{uv})$. Let $\mathcal{F}$ be a path fixing in $\mathscr{F}$ composed entirely of these paths. Consider the $k^\text{th}$ row of $A_\mathcal{F}$ and let $P$ be the path in $\mathcal{F}$ corresponding to that row. Since $P$ was chosen so that $E(P)$ does not contain both $e_i, e_j$, at least one of $a_{k,i}, a_{k,j}$ is 0. Since the index $k$ was chosen arbitrarily within the $\binom{m}{2}$ rows of $A_\mathcal{F}$, this holds for each index $k$ in $\{1, \dots, \binom{m}{2}\}$. Thus $a_i^T a_j = \sum_{k=1}^{\binom{m}{2}} a_{k,i} a_{k,j} = 0$. Since there exists at least one $\mathcal{F} \in \mathscr{F}$ such that $e_1, e_2$ are not adjacent in $G(A_\mathcal{F})$ then, the edge $e_i e_j$ is not in $E(\bigcap_{\mathcal{F}\in\mathscr{F}} G(A_\mathcal{F}))$. Having now shown that both inclusions hold, we conclude that $E(H) = E(\bigcap_{\mathcal{F}\in\mathscr{F}} G(A_\mathcal{F}))$ and thus have shown that $H = \bigcap_{\mathcal{F}\in\mathscr{F}} G(A_\mathcal{F})$.
\end{proof}

\begin{proposition}\label{prop:spp-facets}
For any path fixing $\mathcal{F}\in\mathscr{F}$, the inequalities \refp{eqn:clique-cut} are facet-defining for $S(\mathcal{F})$.
\end{proposition}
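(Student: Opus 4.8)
The plan is to identify $S(\mathcal{F})$ as a set-packing polytope and then invoke Lemma~\ref{lem:setpacking}. By construction $S(\mathcal{F})=\conv(X(\mathcal{F}))$ is exactly the set-packing polytope of the $0$-$1$ matrix $B_\mathcal{F}$: this matrix has no zero row (each identity-block row carries a $1$ in every one of the $|K|$ copies of $I_m$, and each path--color row corresponds to a nonempty shortest path), its columns are indexed by the pairs $(e,k)\in E(G)\times K$, and the left-hand side of \refp{eqn:clique-cut} for the fixed color $\bar k$ is precisely $\sum_{e\in U}x_{e\bar k}$, i.e.\ $\sum_{j:v_j\in L}x_j$ for $L:=\{(e,\bar k):e\in U\}$. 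So it suffices to show that $L$ is a clique in the graph $G(B_\mathcal{F})$.

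The key step is to read off the adjacency structure of $G(B_\mathcal{F})$ directly from the block form of $B_\mathcal{F}$. Computing the inner product of two columns $(e,k)$ and $(e',k')$: the rows of the $[I_m\mid\cdots\mid I_m]$ strip contribute exactly when $e=e'$, and the rows of the $k''$-th diagonal copy of $A_\mathcal{F}$ contribute only when $k=k'=k''$, in which case their total contribution is the inner product $a_e^{\mathrm{T}}a_{e'}$ of the corresponding columns of $A_\mathcal{F}$. Hence two distinct vertices $(e,k),(e',k')$ are adjacent in $G(B_\mathcal{F})$ if and only if either $e=e'$, or $k=k'$ and $ee'\in E(G(A_\mathcal{F}))$.

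It then remains to combine this with Proposition~\ref{prop:clique-graph}. Since $H=\bigcap_{\mathcal{F}\in\mathscr{F}}G(A_\mathcal{F})$, we have $E(H)\subseteq E(G(A_\mathcal{F}))$ for the path fixing $\mathcal{F}$ at hand. Now take any two distinct elements $(e,\bar k),(e',\bar k)\in L$; they share the color $\bar k$, and because $U$ is a clique in $H$ we have $ee'\in E(H)\subseteq E(G(A_\mathcal{F}))$, so the adjacency characterization gives that $(e,\bar k)$ and $(e',\bar k)$ are adjacent in $G(B_\mathcal{F})$. Thus $L$ is a clique in $G(B_\mathcal{F})$, and Lemma~\ref{lem:setpacking} yields that \refp{eqn:clique-cut} is facet-defining for $S(\mathcal{F})$.

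I expect the only real obstacle to be the bookkeeping in the adjacency characterization of $G(B_\mathcal{F})$ --- in particular, tracking which rows of $B_\mathcal{F}$ contribute to a given column inner product, so that the color coincidence $k=k'$ is correctly coupled to edge adjacency in $G(A_\mathcal{F})$, while two columns for the same edge in different colors are always linked through the identity strip. (If the form of Lemma~\ref{lem:setpacking} being invoked were to demand that $L$ be a \emph{maximal} clique of $G(B_\mathcal{F})$, a further and more delicate argument would be required to verify maximality; as stated, the lemma asks only for a clique, so the argument above closes the proof.)
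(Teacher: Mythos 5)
Your proposal is correct and takes essentially the same route as the paper: reduce via \autoref{lem:setpacking} to showing that $\{(e,\bar k):e\in U\}$ is a clique in $G(B_\mathcal{F})$, use \autoref{prop:clique-graph} to obtain $E(H)\subseteq E(G(A_\mathcal{F}))$, and read the needed adjacencies off the block structure of $B_\mathcal{F}$. Your explicit column-inner-product characterization of adjacency in $G(B_\mathcal{F})$, the no-zero-row check, and the parenthetical remark about maximality are only slightly more detailed versions of what the paper compresses into ``inspecting the block structure,'' and the paper invokes the lemma in the same (clique, not maximal clique) form that you do.
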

\begin{proof}
By \autoref{lem:setpacking}, it suffices to show that for any $U\subseteq{V(H)=E(G)}$ that is a clique in $H$ and $k\in{K}$, the set $U^k:=\{(e,k)\in{V(G(B_\mathcal{F}))} : e\in{U}\}$ is a clique in $G(B_\mathcal{F})$. 
To that end, fix $(e_1,k),(e_2,k)\in{U^k}$ with $e_1\neq{e_2}$.
By \autoref{prop:clique-graph}, we have that ${E(H)}\subseteq{E(G(A_\mathcal{F}))}$ and thus $(e_1,e_2)\in{E(G(A_\mathcal{F}))}$.
By inspecting the block structure of the matrix $B_\mathcal{F}$, we see that 
$(e_1,e_2)\in{E(G(A_\mathcal{F}))}$ implies
$\big((e_1,k),(e_2,k)\big)\in{E(G(B_\mathcal{F}))}$, and thus $U^k$ is a clique in $G(B_\mathcal{F})$.
\end{proof}

Next, we use \autoref{prop:spp-facets} to prove \autoref{prop:facets}. We first prove that the convex hull of the relaxation $X$ is full-dimensional. For brevity in the remainder of the section, we let $E=E(G)$.

\begin{proposition}
$\dim(\conv(X))=m|K|+|\mathcal{P}|$, i.e., $\conv(X)$ is full-dimensional.
\end{proposition}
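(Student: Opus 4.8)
The plan is to prove $\dim(\conv(X))\ge m|K|+|\mathcal{P}|$ by exhibiting $m|K|+|\mathcal{P}|+1$ affinely independent points of $X$; since $X\subseteq\mathbb{R}^{m|K|+|\mathcal{P}|}$, the reverse inequality is automatic and equality follows. Concretely, I would fix a single base point $\bar p=(\bar x,\bar y)\in X$ and then perturb it in each coordinate direction, checking that every perturbed point stays in $X$. The convenient base point is $\bar x\equiv 0$, $\bar y\equiv 1$: then $\sum_{k\in K}\bar x_{ek}=0\le 1$ for all $e\in E$; each constraint \refp{ip:master-big-m} reads $0+(|P|-1)\cdot 1=|P|-1\le|P|$; and each constraint \refp{ip:master-sum-y} reads $|\mathcal{P}_{uv}|\ge 1$, which holds for every $u\ne v$ because $G$ is connected. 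Hence $\bar p\in X$.

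For the $x$-coordinates, for each $(e,k)\in E\times K$ let $\bar p^{(e,k)}$ be obtained from $\bar p$ by raising the single entry $x_{ek}$ to $1$. This point satisfies $\sum_{k'\in K}x_{ek'}=1\le 1$; for every path $P$ and color $k'$ the left side of \refp{ip:master-big-m} is at most $1+(|P|-1)=|P|$ (it equals $|P|$ exactly when $k'=k$ and $e\in E(P)$, and equals $|P|-1$ otherwise); and the constraints \refp{ip:master-sum-y} are untouched. So $\bar p^{(e,k)}\in X$ and $\bar p^{(e,k)}-\bar p$ is the $(e,k)$-th standard unit vector. For the $y$-coordinates, for each $P\in\mathcal{P}$ let $\bar p_P$ be obtained from $\bar p$ by lowering $y_P$ to $0$; lowering a $y$-variable only relaxes \refp{ip:master-big-m}, so the only constraint that can be threatened is \refp{ip:master-sum-y} for the pair $u\ne v$ with $P\in\mathcal{P}_{uv}$, which now reads $|\mathcal{P}_{uv}|-1\ge 1$. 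Granting this, $\bar p_P\in X$ and $\bar p_P-\bar p$ is minus the $P$-th standard unit vector. The resulting $m|K|+|\mathcal{P}|$ displacement vectors are then $\pm$ the standard basis of $\mathbb{R}^{m|K|+|\mathcal{P}|}$, hence linearly independent, so $\bar p$ together with the points $\{\bar p^{(e,k)}\}$ and $\{\bar p_P\}$ are affinely independent, giving the claim.

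The step that needs the most care — and the only genuinely non-routine one — is the feasibility of the $y$-perturbations: lowering $y_P$ to $0$ is admissible precisely when the vertex pair joined by $P$ admits a second shortest path, i.e.\ $|\mathcal{P}_{uv}|\ge 2$ whenever $P\in\mathcal{P}_{uv}$. If some pair $u\ne v$ had a unique shortest path $P$, then \refp{ip:master-sum-y} together with $y_P\le 1$ would pin $y_P=1$ on all of $X$, so $\conv(X)$ would lie in the hyperplane $\{y_P=1\}$ and could not be full-dimensional; thus this condition is in fact necessary as well as sufficient, and I would state it as a standing hypothesis (every pair of distinct vertices is joined by at least two shortest paths) before running the construction above. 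Under that hypothesis each covering constraint at $\bar p$ actually reads $|\mathcal{P}_{uv}|\ge 2$, so every $\bar p_P$ is feasible and the affine-independence count goes through unchanged.
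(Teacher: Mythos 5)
Your construction is exactly the one the paper uses: the same base point ($x\equiv 0$, $y\equiv 1$), the same $m|K|$ points obtained by raising a single $x_{ek}$ to $1$ while keeping every $y_P=1$, the same $|\mathcal{P}|$ points obtained by lowering a single $y_P$ to $0$, and the same affine-independence count via the resulting $\pm$ unit displacement vectors. The one place you depart from the paper is precisely the step you flag: feasibility of the $y$-lowered points. The paper asserts without comment that the point with $y_P=0$ (and all other coordinates as in the base point) lies in $X$, but, as you observe, it violates the covering constraint $\sum_{P'\in\mathcal{P}_{uv}}y_{P'}\geq 1$ whenever $P$ is the \emph{unique} shortest $(u,v)$-path, and in that case $y_P=1$ is an implicit equation for $X$, so $\conv(X)$ cannot be full-dimensional at all. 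Since $\mathcal{P}_{uv}$ is defined for every pair of distinct vertices---in particular for adjacent pairs, whose unique shortest path is the edge joining them, and for every pair in the geodetic graphs the paper emphasizes elsewhere---your standing hypothesis that $|\mathcal{P}_{uv}|\geq 2$ for all $u\neq v$ is genuinely needed and is not guaranteed by the paper's assumptions. So your argument is correct for the statement as you amend it, and the discrepancy you identify is a gap in the paper's unqualified claim and proof (one that propagates to the later facet argument, which reuses the same kind of points), not a gap in your own reasoning.
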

\begin{proof}
We construct $m|K|+|\mathcal{P}|+1$ points in $X$ which are affinely independent. For $e\in{E}$ and $k\in{K}$, define a point with $x_{ek}=1$, $x_{e'k'}=0$ for all $(e',k')\in{E\times{K}}\setminus\{(e,k)\}$, and $y_P=1$ for all $P\in\mathcal{P}$. For all $P\in\mathcal{P}$, define a point with $x_{ek}=0$ for all $(e,k)\in{E\times{K}}$, $y_{P'}=1$ for all $P'\in\mathcal{P}\setminus\{P\}$, and $y_P=0$. Finally, define the single point with $x_{ek}=0$ for all $(e,k)\in{E\times{K}}$ and $y_P=1$ for all $P\in\mathcal{P}$. The resulting set of $m|K|+|\mathcal{P}|+1$ points lie in $X$ and are affinely independent.
\end{proof}

\begin{proof}[Proof of \autoref{prop:facets}]
Fix a color $\hat{k}\in{K}$, a clique in $U$ in $H$, and a path fixing $\mathcal{F}\in\mathscr{F}$. Because $\conv(X)$ is full-dimensional, it suffices to construct $m|K|+|\mathcal{P}|$ points in $X$ which satisfy \refp{eqn:clique-cut} for clique $U$ and color $\hat{k}$ with equality. 
We define these points in three sets:
\begin{itemize}
\item Set 1 ($m|K|$ points): By \autoref{prop:spp-facets}, there exist $m|K|$ points $\{\hat{x}^{(e,k)} : e\in{E},k\in{K}\}$ in $X(\mathcal{F})$ which are affinely independent and satisfy \refp{eqn:clique-cut} with equality. For each $e\in{E}$ and $k\in{K}$, define the point $z^{(e,k)}=(\hat{x}^{(e,k)},\hat{y}^{(e,k)})$ where $\hat{y}^{(e,k)}_P\in\{0,1\}$ equals $1$ if and only $P\in\mathcal{F}$. These $m|K|$ points lie in $X$ and satisfy \refp{eqn:clique-cut} with equality.
\item Set 2 ($|\mathcal{F}|$ points): Fix an edge $\hat{e}\in{U}$. For each path $P\in\mathcal{F}$ define the point $z^P=(\hat{x}^P,\hat{y}^P)$ where $\hat{x}^P_{ek}\in\{0,1\}$ equals $1$ if and only if $(e,k)=(\hat{e},\hat{k})$, and
\[
    \hat{y}^P_{P'}=\begin{cases}
    1,&P'\in\mathcal{F}\setminus\mathcal{P}_{uv}\text{ or }P'\in\mathcal{P}_{uv}\setminus\mathcal{F},\\
    0,&\text{otherwise,}
    \end{cases}
\]
where $u\neq{v}\in{V(G)}$ is the set of vertices such that $P\in\mathcal{P}_{uv}$. These $|\mathcal{F}|$ points lie in $X$ and satisfy \refp{eqn:clique-cut} with equality.
\item Set 3 ($|\mathcal{P}\setminus\mathcal{F}|$ points): Fix an edge $\hat{e}\in{U}$. For each path $P\in\mathcal{P}\setminus\mathcal{F}$, define the point $z^P=(\hat{x}^P,\hat{y}^P)$ where $\hat{x}^P_{ek}\in\{0,1\}$ equals $1$ if and only if $(e,k)=(\hat{e},\hat{k})$, and
\[
    \hat{y}^P_{P'}=\begin{cases}
    1,&P'\in\mathcal{F}\text{ or }P=P',\\
    0,&\text{otherwise.}
    \end{cases}
\]
\end{itemize}
The $m|K|+|\mathcal{P}|$ points $z^{(e,k)}$ for $(e,k)\in{E\times{K}}$ and $z^P$ for $P\in\mathcal{P}$ all lie in $X$, and satisfy \refp{eqn:clique-cut} with equality. We conclude the proof by demonstrating that they are affinely independent. With the points in set 1 we associate the multipliers $\mu^{(e,k)}$ for $e\in{E}$ and $k\in{K}$. With the points in sets 2 and 3 we associate the multipliers $\lambda^P$ for $P\in\mathcal{P}$. Suppose that
\begin{equation}\label{eqn:ai-gen}\tag{$\star$}
    \sum_{e\in{E}}\sum_{k\in{K}}\mu^{(e,k)}z^{(e,k)}+\sum_{P\in\mathcal{P}}\lambda^Pz^P=0
    \quad\text{and}\quad
    \sum_{e\in{E}}\sum_{k\in{K}}\mu^{(e,k)}+\sum_{P\in\mathcal{P}}\lambda^P=0.
\end{equation}
These conditions are equivalent to the linear system
\begin{subequations}\label{eqn:ai}
\begin{align}
\label{eqn:ai01}
\sum_{e\in{E}}\sum_{k\in{K}}\mu^{(e,k)}\hat{x}^{(e,k)}_{\hat{e},\hat{k}}+\sum_{P\in\mathcal{P}}\lambda^P&=0\\
\label{eqn:ai02}
\sum_{e\in{E}}\sum_{k\in{K}}\mu^{(e,k)}\hat{x}^{(e,k)}_{e',k'}&=0&&\forall\;(e',k')\in{E\times{K}}\setminus\{(\hat{e},\hat{k})\}\\
\label{eqn:ai03}
\sum_{e\in{E}}\sum_{k\in{K}}\mu^{(e,k)}+\sum_{P\in\mathcal{P}}\lambda^P&=\lambda^{P'}&&\forall\;P'\in\mathcal{F}\\
\label{eqn:ai04}
\lambda^P+\lambda^{P'}&=0&&\forall\;u\neq{v}\in{V(G)},P\in\mathcal{F},P'\in\mathcal{P}_{uv}\setminus\{P\}\\
\label{eqn:ai05}
\sum_{e\in{E}}\sum_{k\in{K}}\mu^{(e,k)}+\sum_{P\in\mathcal{P}}\lambda^P&=0
\end{align}
\end{subequations}
Combining \refp{eqn:ai03} and \refp{eqn:ai05}, we see immediately that $\lambda^P=0$ for all $P\in\mathcal{F}$. Plugging this into \refp{eqn:ai04}, we see that $\lambda^P=0$ for all $P\in\mathcal{P}\setminus\mathcal{F}$, and thus $\lambda^P=0$ for all $P\in\mathcal{P}$. Hence the system \refp{eqn:ai} reduces to 
\[
    \sum_{e\in{E}}\sum_{k\in{K}}\mu^{(e,k)}\hat{x}^{(e,k)}=0
    \quad\text{and}\quad
    \sum_{e\in{E}}\sum_{k\in{K}}\mu^{(e,k)}=0.
\]
Because the points $\{\hat{x}^{(e,k)}\}_{e\in{E},k\in{K}}$ are affinely independent, it follows that $\mu^{(e,k)}=0$ for all $e\in{E}$, $k\in{K}$. We have shown that \refp{eqn:ai-gen} implies $\mu=0$ and $\lambda=0$. We conclude that the points in sets 1, 2 and 3 are affinely independent.
\end{proof}

\end{document}